\newtheorem{theorem}{Theorem}[section]
\newtheorem{lemma}[theorem]{Lemma}
\newtheorem{corollary}[theorem]{Corollary}
\theoremstyle{definition}
\newtheorem{definition}[theorem]{Definition}
\theoremstyle{remark}
\newtheorem{remark}[theorem]{Remark}
\numberwithin{equation}{section}
\newfont{\kh}{msbm10}
\begin{document}
\title[A Riesz-Fredholm type theorem]
{A Riesz-Fredholm type theorem on certain Hilbert C*-modules}

\author{Z. Panahi}
\address{Zahra Panahi, \newline Faculty of Mathematical Sciences,
Shahrood University of Technology, P. O. Box 3619995161-316,
Shahrood, Iran } \email{zpanahi654@gmail.com}
\author{K. Sharifi}
\address{Kamran Sharifi, \newline Faculty of Mathematical Sciences,
Shahrood University of Technology, P. O. Box 3619995161-316,
Shahrood, Iran} \email{sharifi.kamran@gmail.com}

\subjclass[2010]{Primary 46L08; Secondary 47A05,  46L05, 15A09}
\keywords{Hilbert C*-module, compact operator, Moore-Penrose inverse,  EP operator, finitely generated submodule}

\begin{abstract}
Let $C$ be compact modular operator on a Hilbert C*-module $E$ satisfying property $\mathbb{[H]}$
[{\it J. Math. Phys.} {\bf 49} (2008), 033519], and let $ L :=I-C$.
We prove the existence of a unique natural number $r$ for which $L^r$ is an EP operator on $E$.
Moreover, we show that the kernel of $L^r$ is a finitely generated submodule of $E$ and that $E$ admits the decomposition
$E=Ker(L^r) \oplus Ran(L^r)$.  These results provide a framework
for analyzing the solvability of the equation $x-Cx=f$ on $E$.

\end{abstract}

\maketitle

\section{Introduction and preliminaries.}

In Banach spaces theory, compact operators are linear operators on Banach spaces that map
bounded sets to relatively compact sets.
In the case of a Hilbert C*-modules, the compact
operators are the closure of the finite rank operators in the uniform operator topology.
In general, operators on Hilbert modules feature properties that do not appear
in the Hilbert (or Banach) spaces case, cf. \cite{Arambasic, FR3, LAN, SharifiInv}.
In this paper, we are going to present the basic theory for an operator equation
\begin{equation}\label{RF1}
x-Cx=f
\end{equation}
with a compact modular operator $C: E \to E$ mapping a
Hilbert C*-module $E$ into itself. This work extends the classical framework
established by Riesz and Fredholm for integral equations in Hilbert and Banach spaces, cf. \cite{Fredholm,Riesz}.

A \textit{pre-Hilbert module} over a C*-algebra $ \mathcal{A}$
is a complex linear space $E$ which is an algebraic right
$ \mathcal{A}$-module equipped with an $ \mathcal{A}$-valued inner product
$\langle \cdot, \cdot \rangle:  E \times  E \to  \mathcal{A} $ satisfying
the following properties for each
$x, y, z \in  E$, $\lambda \in \mathbb{C}$, and $a \in \mathcal{A}$:
\begin{enumerate}
   \item $ \langle x, x \rangle \geq 0$, and $\langle x, x \rangle = 0$  if and only if $x = 0,$
   \item $\langle x, y + \lambda z \rangle = \langle x, y \rangle + \lambda \langle x, z \rangle,$
   \item $\langle x, ya \rangle = \langle x, y \rangle a, $
   \item $ \langle y, x \rangle = \langle x, y \rangle^*$.
\end{enumerate}
The $\mathcal{A}$-module  $E$ is called a Hilbert C*-module
if $E$ is complete with respect to the norm $ \Vert x\Vert  : =\Vert \langle
 x,x\rangle \Vert ^{1/2}.$
If $F$ is a (possibly non-closed) $\mathcal{A}$-submodule
of $E$, then $F^\bot :=\{ y \in E: ~ \langle x,y \rangle=0,~ \ {\rm for} \ {\rm
all}\ x \in F \} $ is a closed $\mathcal A$-submodule of $E$ and
$ \overline{F} \subseteq F^{ \perp \, \perp}$.
A closed submodule $F$ of a Hilbert $ \mathcal{A}$-module $E$ is \textit{ topologically
complementable} if there is a closed submodule $G$ such that $E=F + G$ and $F \cap G = \{0\}$. In this case,
we write $E=F \dot{+} G$.
The submodule $F$  is called \textit{ orthogonally complemented} if we  further have $ F \perp G$.
In this case $G=F^{ \perp}$, and we write $E=F \oplus G$.

Let $E$ be a Hilbert $\mathcal{A}$-module over a C*-algebra $\mathcal{A}$.
We denote by $\mathcal{L}(E)$ 
the C*-algebra of all adjointable operators on $E$. These are the $\mathcal{A}$-linear maps $T:E \to E$ for which there exists
an adjoint operator $T^*:E \to E$ satisfying $$
\langle Tx,y \rangle =\langle x,T^*y \rangle~ \textrm{for~all}~  x,y \in
E.$$
For any $ x,y \in
E$, the rank one operator $\theta_{x,y}$ is defined by
$$\theta_{x,y}(z) = x \langle y,z \rangle ~ \textrm{for}~ z \in E  .$$
Each $\theta_{x,y}$ is a bounded adjointable operator with adjoint $(\theta_{x,y} )^*=\theta_{y,x}$.
The space of compact operators $ \mathcal{K}(E)$ is defined as the
norm closure of the linear span of all such rank one operators in $\mathcal{L}(E)$.

We note that some familiar properties of Hilbert spaces, such as Pythagoras' equality, self-duality,
the polar decomposition of operators, and even the decomposition
into orthogonal complements, do not hold in the framework of Hilbertian modules.
In general, operators on Hilbert modules feature properties that do not appear
in the Hilbert (or Banach) spaces case.
The reader is encouraged to study the publications \cite{BakicFrame, ELMXZ, FR3, FrankReg, FS2, NHA, ZamaniCmodule}
and the references therein for more detailed information on modular operators and some recent developments
in the theory of Hilbert C*-modules.

We use the notations
$Ker(.)$ and $Ran(.)$ to denote the kernel and range of operators, respectively.
The modular operator $T$ has closed range if and only if $T^*$ has closed range.
In this case, the space $E$ decomposes as $E=Ker(T)
\oplus Ran(T^*)=Ker(T^*) \oplus Ran(T)$; see \cite[Theorem 3.2]{LAN}.

Every bounded sequence in a Hilbert space has a weakly convergent subsequence.
Furthermore, compact operators map weakly convergent sequences to strongly convergent ones.
Now, suppose $E$ is a Hilbert $ \mathcal{A}$-module. One can ask whether for every bounded sequence $(\zeta_{n})$ in $E$,
there exist a subsequence $(\zeta_{n_k})$ and an element $\zeta$ in $E$ such that
\begin{equation*}
\| C \zeta_{n_k} - C \zeta\| \to 0, ~~~~ \mathrm{for~every~} C\in \mathcal{K}(E).
\end{equation*}
In general, the answer is negative. For example, let $ \mathcal{A} = \mathcal{ B}(H)$
for some infinite-dimensional Hilbert space $H$, and consider $E= \mathcal{A}$ as a Hilbert C*-module over itself.
Then the identity operator on $H$, which is not a compact operator on $H$, corresponds to a compact operator in $\mathcal{K}(E)$.
However, since $H$ is infinite dimensional, the identity operator cannot map a
weakly convergent sequence to a strongly convergent one, demonstrating that the desired property fails.
This argument motivates the study of property $\mathbb{[H]}$, initially introduced in \cite{CIMS,FR3},
which will be discussed in more detail in the next section. It is known that every Hilbert C*-module over
a finite dimensional C*-algebra satisfies property $\mathbb{[H]}$.

Let $E$ be a Hilbert C*-module over a unital C*-algebra $ \mathcal{A}$ that satisfies
property $\mathbb{[H]}$ and let $L=I-C$, where $C \in \mathcal{K}(E)$ is a compact operator.
In this paper, we study the equation $x-Cx=f$ and prove that
there exists a unique natural number $r$ such that  $L^r$ is an EP operator on $E$, that is, the range
of $L^r$ is closed and satisfies $Ran(L^r)=Ran(L^{r*})$.
In particular, the kernel of $L^r$ is a finitely generated submodule of $E$, and $E$
admits the direct sum decomposition $E=Ker(L^r) \oplus Ran(L^r)$.
Some further useful results are also obtained.

\section{Hilbert C*-modules satisfying property $\mathbb{[H]}$.}

In this section, we review the property $\mathbb{[H]}$ introduced by
Chmieli\'nski, Ili\v{s}evi\'c, Moslehian and Sadeghi  \cite{CIMS}, along with a characterization of finite
dimensional C*-algebras established by
Aramba\v{s}i\'c, Baki\'c and Raji\'c \cite {Arambasic}. Definition \ref{CIMS},
Lemma \ref{Arambasic1} and Lemma \ref{Riesz1} serve as foundational elements for our main results.

\begin{definition} \label{CIMS} (\cite[Proposition 2.1]{CIMS})
Let $ \mathcal{A}$ be a C*-algebra. The right Hilbert $\mathcal{A}$-module $E$ satisfies property $\mathbb{[H]}$,
if for every bounded sequence
$(\zeta_n)$ in $E$ there are a subsequence $(\zeta_{n_k})$ of $(\zeta_n)$ and $\zeta \in E$ such that
\begin{equation}\label{CIMS1}
\langle v , \zeta_{n_k} \rangle \to \langle v , \zeta \rangle, ~~~~ \mathrm{for~every~} v \in E.
\end{equation}
\end{definition}
Frank has shown that every
Hilbert C*-module over an arbitrary finite dimensional C*-algebra of coefficients is a
self-dual Hilbert module, cf. \cite[Proposition 4.4]{FR3}. Chmieli\'nski et al. utilized this result to establish the following lemma:
\begin{lemma} \label{CIMS2} (\cite[Proposition 2.1]{CIMS})
Let $E$ be a Hilbert C*-module over a finite dimensional C*-algebra $\mathcal{A}$, then
$E$ satisfies property $\mathbb{[H]}$.
\end{lemma}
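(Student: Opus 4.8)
The plan is to transfer the problem to an auxiliary scalar Hilbert space built from a faithful trace, extract a weakly convergent subsequence there, and then invoke Frank's self-duality to pull the limit functional back into $E$. Since $\mathcal{A}$ is finite dimensional it carries a faithful positive trace $\mathrm{tr}$, and I would set $\langle x,y\rangle_{\mathrm{tr}}:=\mathrm{tr}\langle x,y\rangle$. By faithfulness of $\mathrm{tr}$ together with axiom (1) this is a genuine $\mathbb{C}$-valued inner product on $E$, so its completion $H$ is a Hilbert space. The first routine observation is that the module norm dominates the $H$-norm: a positive linear functional is bounded, so $\mathrm{tr}(a)\le\|\mathrm{tr}\|\,\|a\|$ for $a\ge0$, whence $\|x\|_{\mathrm{tr}}\le\|\mathrm{tr}\|^{1/2}\,\|x\|$. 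In particular every module-bounded sequence $(\zeta_n)$ is bounded in $H$.

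Given such a bounded sequence, I would pass to the separable closed subspace of $H$ generated by $(\zeta_n)$ and use weak sequential compactness of bounded sets in a Hilbert space to extract a subsequence $(\zeta_{n_k})$ converging weakly in $H$; thus $\langle w,\zeta_{n_k}\rangle_{\mathrm{tr}}$ converges for every $w\in H$, in particular for every $w\in E$. The key step is to upgrade this scalar convergence to convergence of the $\mathcal{A}$-valued inner products. Fix $v\in E$. From axioms (3)--(4) one gets $\langle va,\zeta_{n_k}\rangle=a^*\langle v,\zeta_{n_k}\rangle$, so applying weak convergence to the vectors $va\in E$ shows that $\mathrm{tr}\big(a^*\langle v,\zeta_{n_k}\rangle\big)$ converges for every $a\in\mathcal{A}$. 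Since $(a,b)\mapsto\mathrm{tr}(a^*b)$ is a faithful inner product making the \emph{finite dimensional} space $\mathcal{A}$ a Hilbert space, this says exactly that the bounded sequence $\big(\langle v,\zeta_{n_k}\rangle\big)_k$ converges weakly in $\mathcal{A}$; in finite dimensions weak convergence is norm convergence, so $\langle v,\zeta_{n_k}\rangle\to\psi(v)$ in $\mathcal{A}$ for a well-defined $\psi(v)\in\mathcal{A}$.

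It remains to realise $\psi$ through an element of $E$. Taking adjoints, $\varphi(v):=\psi(v)^*=\lim_k\langle\zeta_{n_k},v\rangle$ defines a map $E\to\mathcal{A}$ that is $\mathcal{A}$-linear, since $\langle\zeta_{n_k},va\rangle=\langle\zeta_{n_k},v\rangle a$ passes to the limit, and bounded, because $\|\varphi(v)\|\le(\sup_k\|\zeta_{n_k}\|)\,\|v\|$. By Frank's theorem (\cite[Proposition 4.4]{FR3}) the module $E$ is self-dual, so there is $\zeta\in E$ with $\varphi(v)=\langle\zeta,v\rangle$ for all $v$. Conjugating back gives $\langle v,\zeta_{n_k}\rangle\to\langle v,\zeta\rangle$ for every $v\in E$, which is precisely \eqref{CIMS1}.

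I expect the genuinely substantive ingredient to be the self-duality step: without it the limit functional $\varphi$ would at best be represented by a vector in the completion $H$ (or merely in the dual module), and there would be no reason for $\lim_k\langle v,\zeta_{n_k}\rangle$ to arise from an element of $E$ itself. The passage from weak convergence in $H$ to norm convergence in $\mathcal{A}$ is the other point to get right, but it is clean and rests only on the finite dimensionality of $\mathcal{A}$ and faithfulness of $\mathrm{tr}$; the remaining verifications (the boundedness constants and the $\mathcal{A}$-linearity of $\varphi$) are routine.
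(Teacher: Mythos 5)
Your proof is correct, and it takes essentially the same route the paper points to: the paper does not prove Lemma \ref{CIMS2} itself but cites \cite[Proposition 2.1]{CIMS}, explicitly noting that the key ingredient there is Frank's self-duality theorem \cite[Proposition 4.4]{FR3}, which is exactly the pivot of your argument. Your supporting steps (the faithful-trace inner product, weak sequential compactness in the resulting Hilbert space, and the upgrade to norm convergence in the finite-dimensional algebra $\mathcal{A}$) are all sound, so nothing further is needed.
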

It is known that full Hilbert C*-modules can
be described by the property $\mathbb{[H]}$ as follows.
Let $E$ be a full right Hilbert $\mathcal{A}$-module.
For every bounded sequence
$(\zeta_n)$ in $E$ there are a subsequence $(\zeta_{n_k})$ of $(\zeta_n)$ and $\zeta \in E$ such that
(\ref{Aram0}) holds if and only if $\mathcal{A}$ is a finite dimensional C*-algebra, cf. \cite[Theorem 2.3]{Arambasic}.
\begin{lemma} \label{Arambasic1} (\cite[Theorem 2.3]{Arambasic})
Let $E$ be a Hilbert $\mathcal{A}$-modules that satisfies property $\mathbb{[H]}$,  then
\begin{equation}\label{Aram0}
\| C \zeta_{n_k} - C \zeta\| \to 0, ~~~~ \mathrm{for~every~} C\in \mathcal{K}(E).
\end{equation}
\end{lemma}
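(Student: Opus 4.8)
The plan is to reduce the statement to rank-one operators and then pass to arbitrary compact operators by a density-and-approximation argument, exhibiting a \emph{single} subsequence that works for all $C$ at once. First I would apply property $\mathbb{[H]}$ to the given bounded sequence $(\zeta_n)$ to obtain a subsequence $(\zeta_{n_k})$ and an element $\zeta \in E$ with $\langle v, \zeta_{n_k}\rangle \to \langle v, \zeta\rangle$ in $\mathcal{A}$ for every $v \in E$, as in (\ref{CIMS1}). I then set $M := \sup_k \|\zeta_{n_k}\| < \infty$; taking $v=\zeta$ and using $\|\langle \zeta, \zeta_{n_k}\rangle\| \le \|\zeta\|\,\|\zeta_{n_k}\|$ together with the norm convergence $\langle \zeta,\zeta_{n_k}\rangle \to \langle\zeta,\zeta\rangle$ yields $\|\zeta\|^2 \le \|\zeta\|\,M$, hence $\|\zeta\| \le M$ as well, so every term of the shifted sequence is bounded by $M$.

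The rank-one case is then immediate. For $\theta_{x,y}$ one has
\[
\theta_{x,y}\zeta_{n_k} - \theta_{x,y}\zeta = x\bigl(\langle y, \zeta_{n_k}\rangle - \langle y, \zeta\rangle\bigr),
\]
so that $\|\theta_{x,y}\zeta_{n_k} - \theta_{x,y}\zeta\| \le \|x\|\,\|\langle y, \zeta_{n_k}\rangle - \langle y, \zeta\rangle\| \to 0$, the convergence being exactly the defining property of $\mathbb{[H]}$ with $v=y$. By linearity the same conclusion holds for every finite linear combination of rank-one operators, that is, for every finite-rank operator $F$.

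To treat a general $C \in \mathcal{K}(E)$, I would invoke that $\mathcal{K}(E)$ is the norm closure of the finite-rank operators: given $\varepsilon > 0$, pick a finite-rank $F$ with $\|C - F\| < \varepsilon$ and estimate
\[
\|C\zeta_{n_k} - C\zeta\| \le \|C-F\|\,\|\zeta_{n_k}\| + \|F\zeta_{n_k} - F\zeta\| + \|C-F\|\,\|\zeta\| \le 2\varepsilon M + \|F\zeta_{n_k} - F\zeta\|.
\]
Letting $k \to \infty$ annihilates the middle term by the finite-rank case, giving $\limsup_k \|C\zeta_{n_k} - C\zeta\| \le 2\varepsilon M$; since $\varepsilon$ is arbitrary, (\ref{Aram0}) follows.

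The $\varepsilon/3$-type estimate is routine; the only point requiring care is conceptual rather than computational, namely that the subsequence $(\zeta_{n_k})$ and the limit $\zeta$ must be produced \emph{once and for all} from property $\mathbb{[H]}$ and then shown to serve every $C$ simultaneously, rather than re-extracting for each operator. This causes no real difficulty here, because the rank-one estimate depends on the sequence only through the module-valued convergence $\langle y,\zeta_{n_k}\rangle \to \langle y,\zeta\rangle$, and the approximation bound is uniform in $k$ via the common bound $M$. In essence this lemma is the Hilbert C*-module analogue of the classical fact that compact operators convert weak convergence into norm convergence, so I expect no genuine obstacle.
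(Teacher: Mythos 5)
Your proposal is correct and follows essentially the same route as the paper: verify the convergence for rank-one operators $\theta_{x,y}$ directly from property $\mathbb{[H]}$, then pass to all of $\mathcal{K}(E)$ using that compact operators are the norm closure of the span of rank-one operators. The paper's proof is just terser—it leaves the $\varepsilon$-approximation step and the uniform bound on $\|\zeta_{n_k}\|$ and $\|\zeta\|$ implicit, whereas you spell them out; there is no substantive difference in approach.
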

\begin{proof}
To derive (\ref{Aram0}), observe that $\mathcal{K}(E)$ equals the closed linear span of $\{
\theta_{u,v}: u,v \in E \}$ in $ \mathcal{L}(E) $ and
$$ \| \theta_{u,v} (\zeta_{n_k}) - \theta_{u,v} (\zeta) \| =
\| u \langle v , \zeta_{n_k} \rangle - u \langle v , \zeta \rangle \| \to 0,$$
for~every $u,v\in E$.
\end{proof}

The compact operators on Hilbert modules over a finite dimensional C*-algebra are notable
because they exhibit as much similarity to matrices as one can expect from a general operator.

Some aspects of weakly convergent bounded sequences in Hilbert C*-modules, particularly
those related to fundamental properties of compact operators on such modules over an
arbitrary finite dimensional C*-algebra, can be found in \cite{SharifiInv}.

\begin{lemma} \label{Riesz1}
Let $E$ be a Hilbert $ \mathcal{ A}$-module satisfying property $\mathbb{[H]}$, and
let $L:=I-C$, where $C \in \mathcal{K}(E)$. Then the following assertions hold.
\begin{enumerate}
\item For every $y$ in the closure of $Ran(L)$,
there exist sequences $(x_n)$ in $E$ and $(u_n)$ in the closed submodule $Ker(L)$ of $E$ such that
$Lx_n=x_n - Cx_n$ converges to $y$ and
\begin{equation*}\label{Riesz2}
\| x_n - u_n \|= \delta_n:= \mathrm{ inf} \{ \| x_n - u \| \, : \ u\in Ker(L) \}.
\end{equation*}
  \item The sequence $ \zeta_n := x_n - u_n$ is a bounded sequence in $E$.
\end{enumerate}
\end{lemma}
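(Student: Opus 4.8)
The plan is to handle the two assertions separately, the crucial structural observation being that property $\mathbb{[H]}$ forces the unit ball of $N := Ker(L)$ to be norm-compact. Since $C$ is compact hence adjointable and $I$ is adjointable, $L = I-C$ is bounded, so $N$ is a closed $\mathcal{A}$-submodule of $E$. For the first assertion, fix $y \in \overline{Ran(L)}$ and, by definition of the closure, choose $x_n \in E$ with $Lx_n = x_n - Cx_n \to y$; this already supplies the sequence $(x_n)$. It then remains, for each fixed $n$, to produce $u_n \in N$ attaining the distance $\delta_n = \inf\{\|x_n-u\| : u\in N\}$.

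To attain this infimum I would take a minimizing sequence $(u^{(k)})_k$ in $N$ with $\|x_n - u^{(k)}\| \to \delta_n$, which is bounded since $\|u^{(k)}\| \le \|x_n-u^{(k)}\| + \|x_n\|$. Applying Definition \ref{CIMS} to $(u^{(k)})_k$ gives a subsequence $(u^{(k_j)})_j$ and $\zeta \in E$ with $\langle v, u^{(k_j)}\rangle \to \langle v,\zeta\rangle$ for every $v \in E$, and Lemma \ref{Arambasic1} then yields $\|Cu^{(k_j)} - C\zeta\| \to 0$. Since every element of $N = Ker(I-C)$ is a fixed point of $C$, we have $u^{(k_j)} = Cu^{(k_j)}$, so $u^{(k_j)} \to C\zeta$ in norm; setting $u_n := C\zeta$ and using that $N$ is closed gives $u_n \in N$ with $\|x_n - u_n\| = \lim_j \|x_n - u^{(k_j)}\| = \delta_n$. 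I expect this attainment of the infimum to be the main obstacle: Hilbert C*-modules lack a projection theorem, so best approximations onto closed submodules need not exist in general, and it is the interplay of property $\mathbb{[H]}$ with the fixed-point description of $N$ that forces existence here.

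For the second assertion I would argue by contradiction along the classical Riesz scheme. Suppose $(\zeta_n)$, with $\zeta_n = x_n - u_n$ and $\|\zeta_n\| = \delta_n$, were unbounded; passing to a subsequence assume $\|\zeta_n\| \to \infty$ and set $w_n := \zeta_n/\|\zeta_n\|$, so $\|w_n\| = 1$. Because $u_n \in N$ we have $L\zeta_n = Lx_n \to y$, whence $Lw_n = L\zeta_n/\|\zeta_n\| \to 0$, i.e. $w_n - Cw_n \to 0$. Applying property $\mathbb{[H]}$ and Lemma \ref{Arambasic1} to the bounded sequence $(w_n)$ produces a subsequence $(w_{n_k})$ and $w \in E$ with $\|Cw_{n_k} - Cw\| \to 0$; combined with $w_{n_k} - Cw_{n_k} \to 0$ this gives $w_{n_k} \to Cw =: w_0$ in norm, and by continuity of $L$ we get $Lw_0 = 0$, so $w_0 \in N$.

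The contradiction then flows from a distance identity. For each $k$, writing $w_{n_k} = \|\zeta_{n_k}\|^{-1}(x_{n_k} - u_{n_k})$ and using that $u \mapsto u_{n_k} + \|\zeta_{n_k}\|\,u$ sweeps out all of $N$ (as $N$ is a submodule), one finds $\mathrm{dist}(w_{n_k}, N) = \|\zeta_{n_k}\|^{-1}\inf_{u\in N}\|x_{n_k} - u\| = \delta_{n_k}/\|\zeta_{n_k}\| = 1$. But $w_{n_k} \to w_0 \in N$ forces $\mathrm{dist}(w_{n_k}, N) \le \|w_{n_k} - w_0\| \to 0$, contradicting $\mathrm{dist}(w_{n_k}, N) = 1$. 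Hence $(\zeta_n)$ is bounded. Once Lemma \ref{Arambasic1} is invoked the compactness extraction is routine, so the only computational care needed is in the distance identity; the conceptual weight of the whole lemma rests on the infimum-attainment step of the first part.
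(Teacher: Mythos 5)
Your proposal is correct and takes essentially the same route as the paper's own proof: for part (1) both arguments take a minimizing sequence in $Ker(L)$, apply property $\mathbb{[H]}$ together with Lemma \ref{Arambasic1}, and use the fixed-point identity $u=Cu$ on $Ker(L)$ to upgrade weak convergence to norm convergence of the minimizing sequence. For part (2) both run the same normalization-and-contradiction scheme, and your distance identity $\mathrm{dist}(w_{n_k},Ker(L))=\delta_{n_k}/\|\zeta_{n_k}\|=1$ is exactly the paper's lower bound $\|v_k - v\|\geq 1$, obtained there by testing against the particular element $u_{n_k}+\|\zeta_{n_k}\|v \in Ker(L)$.
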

\begin{proof}
Let $y$ in the closure of $Ran(L)$, then there exists a sequence $(x_n)$ in $E$ such that $Lx_n=x_n - Cx_n$
converges to $y$. We establish the existence of a sequence $(u_n)$ in the closed submodule $Ker(L) \subset E$ satisfying
$\| x_n - u_n \|= \delta_n.$
For each fixed $n$, there exists a sequence $(w_m^n)$ in $Ker(L)$ such that
\begin{equation}\label{w0}
\lim_{m} \|x_n - w_m^n \| = \delta_n.
\end{equation}
Consequently, the sequence $(w_m^n)$ is bounded with $\delta_n + 1 + \|x_n \|$, for sufficiently large $m$.
By Lemma \ref{Arambasic1}, we may extract a subsequence $(w_{m_k}^n)$ converging weakly to $u_n \in Ker(L)$ in the sense that
\begin{equation*}\label{w1}
\langle v, w_{m_k}^n \rangle \to \langle v, u_n \rangle \quad \text{for all } v \in E \text{ and } n \in \mathbb{N},
\end{equation*}
with the additional property
\begin{equation*}\label{w2}
\|C w_{m_k}^n - C u_n \| \to 0 \quad \text{for each } n \in \mathbb{N}.
\end{equation*}
Since both $(u_n)$ and $(w_m^n)$ lie in $Ker(L)$, we have the fixed point properties:
\begin{align*}
C u_n &= u_n, \\
C w_{m_k}^n &= w_{m_k}^n,
\end{align*}
which imply,
\begin{equation}\label{w3}
| \, \|  x_n -  u_n \| - \|   x_n -  w_{m_k}^{n} \|   \, | \leq \|  w_{m_k}^{n} -  u_n \|=\| C w_{m_k}^{n} - C u_n \| \to 0.
\end{equation}
Combining \eqref{w0} and \eqref{w3} yields the equality
$ \|  x_n -  u_n \|= \lim_k \|  x_n -  w_{m_k}^{n}  \|=\delta_n,$ which establishes assertion (1).

We claim $ \zeta_n = x_n - u_n$  is a bounded sequence in $E$. Suppose otherwise, there is a
subsequence $( \zeta_{n_k})$ such that $\| \zeta_{n_k} \| \geq k$.
By compactness of $C$ and Lemma \ref{Arambasic1}, there exists
a subsequence $(v_{k_j})$ of $v_k := \frac{ \zeta_{n_k} }{ \| \zeta_{n_k} \|}$ and $v$ such that
$ Cv_{k_j} \to v$.  Since $L \zeta_{n_k}=Lx_{n_k}-Lu_{n_k}=Lx_{n_k} \to y$,
$$ \| Lv_k \|= \frac{ \| L \zeta_{n_k} \| }{ \| \zeta_{n_k} \| } \leq \frac{ \| L \zeta_{n_k} \| }{ k } \to 0, ~~\mathrm{as}~ k \to \infty.$$
We therefore have $v_{k_j}=Lv_{k_j} + K v_{k_j} \to v$, which implies $0= \lim L v_{k_j} = Lv$. Since
$ u_{n_k} + \| \zeta_{n_k} \|v$ is in $Ker(L)$ we have
$$ \| v_k - v \|= \frac{1}{ \| \zeta_{n_k} \| } \, \| x_{n_k} - (u_{n_k} + \| \zeta_{n_k} \|v) \| \geq \frac{1}{ \| \zeta_{n_k} \| } \,
\inf \| x_{n_k} - u \| = 1,$$
which is a contradiction with $ v_{k_j} \to v$. Consequently, the sequence $( \zeta_{n} )$ is bounded, which completes the proof of (2).
\end{proof}

\section{A Riesz-Fredholm type theorem on Hilbert
C*-modules satisfying property $\mathbb{[H]}$. }

The following lemma follows from Riesz's lemma for Banach spaces, which can be
found in Kreyszig's book, see Lemma 2.5-4 in \cite{Kreyszig}.

\begin{lemma} \label{Riesz0}
Let $E$ be a Hilbert C*-module over an arbitrary C*-algebra $ \mathcal{A}$, $0 < \epsilon < 1$, and let $F$
be a closed proper submodule of $E$. Then there exists
$x \in E$ such that $\| x \|=1$ and
$$ \| x - F \| \geq \epsilon,$$
where $$\| x - F \|:=  \mathrm{ inf} \{ \| x - u \| \, : \ u\in F \}.$$
\end{lemma}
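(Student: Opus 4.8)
The plan is to transplant the classical proof of Riesz's lemma for normed spaces (Lemma 2.5-4 in \cite{Kreyszig}) into the Hilbert C*-module setting, observing that the only structural ingredients it requires are that $E$ is a normed space and that $F$ is a closed subset which is closed under addition and under multiplication by complex scalars. Since a Hilbert $\mathcal{A}$-module is in particular a Banach space under $\|x\| = \|\langle x, x\rangle\|^{1/2}$, and since a submodule $F$ is in particular a complex linear subspace, both requirements are met automatically, and the $\mathcal{A}$-module structure plays no further role.

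First I would exploit that $F$ is a proper submodule to select a vector $v \in E \setminus F$, and set $d := \|v - F\| = \inf\{\|v - u\| : u \in F\}$. Because $F$ is closed, the condition $v \notin F$ forces $d > 0$. Since $0 < \epsilon < 1$, we have $d/\epsilon > d$, so the definition of the infimum furnishes an element $u_0 \in F$ with $d \le \|v - u_0\| \le d/\epsilon$.

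Next I would normalize: put $x := (v - u_0)/\|v - u_0\|$, so that $\|x\| = 1$. For an arbitrary $u \in F$, a direct computation gives
$$\|x - u\| = \frac{1}{\|v - u_0\|}\,\bigl\| v - \bigl(u_0 + \|v - u_0\|\,u\bigr)\bigr\|.$$
The crucial observation is that $u_0 + \|v - u_0\|\,u$ again lies in $F$: here $\|v - u_0\|$ is a (real) scalar, so $\|v - u_0\|\,u \in F$ by closure under complex scalar multiplication, and adding $u_0 \in F$ keeps us inside $F$. Hence the right-hand norm is bounded below by $d$, and we obtain
$$\|x - u\| \ge \frac{d}{\|v - u_0\|} \ge \frac{d}{d/\epsilon} = \epsilon.$$
Taking the infimum over $u \in F$ yields $\|x - F\| \ge \epsilon$, which completes the argument.

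I do not anticipate a genuine obstacle here; the content of the lemma is precisely that the submodule hypothesis (rather than merely a subspace hypothesis) causes no complications, and the sole point requiring attention is confirming that $F$ being a right $\mathcal{A}$-submodule, hence in particular closed under the complex scalar $\|v - u_0\|$, is exactly what legitimizes the step $u_0 + \|v - u_0\|\,u \in F$.
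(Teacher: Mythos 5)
Your proof is correct and takes essentially the same approach as the paper: the paper gives no independent argument but simply invokes the classical Riesz lemma for normed spaces (Lemma 2.5-4 in Kreyszig), the point being exactly the one you identify, namely that $E$ is a Banach space and a closed proper submodule is in particular a closed proper linear subspace, so the classical proof applies verbatim. You have just written out that classical argument in full, including the one step (closure of $F$ under the real scalar $\|v-u_0\|$) where the module structure could conceivably have mattered but does not.
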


\begin{lemma} \label{Conwayp215}
Let $E$ be a Hilbert $ \mathcal{ A}$-module satisfying property $\mathbb{[H]}$ and $C \in \mathcal{K}(E)$.
Then $Ran( \lambda I-C)$ is an orthogonal summand for every nonzero $ \lambda \in \mathbb{C}$. In particular,
$E=Ker (I-C^*) \oplus Ran(I-C)$.
\end{lemma}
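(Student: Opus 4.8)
The plan is to reduce everything to the single fact that $L:=I-C$ has closed range, and then to invoke the closed-range decomposition $E=Ker(T^*)\oplus Ran(T)$ quoted from \cite[Theorem 3.2]{LAN}. The reduction to $\lambda=1$ is purely formal: since $\lambda\neq 0$, I would write $\lambda I-C=\lambda\bigl(I-\lambda^{-1}C\bigr)$ with $\lambda^{-1}C\in\mathcal{K}(E)$, and because $Ran(\cdot)$ is a complex-linear subspace invariant under multiplication by the nonzero scalar $\lambda$, one has $Ran(\lambda I-C)=Ran(I-\lambda^{-1}C)$. Thus it suffices to prove that $I-C'$ has closed range for an arbitrary compact $C'\in\mathcal{K}(E)$; I will carry out this step writing $L=I-C'$.

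The heart of the argument is showing that $Ran(L)$ is closed, and this is exactly what Lemma \ref{Riesz1} was engineered to supply. Given $y$ in the closure of $Ran(L)$, Lemma \ref{Riesz1} furnishes sequences $(x_n)$ in $E$ and $(u_n)$ in $Ker(L)$ with $Lx_n\to y$ such that $\zeta_n:=x_n-u_n$ is bounded; moreover $L\zeta_n=Lx_n-Lu_n=Lx_n\to y$ because $u_n\in Ker(L)$. Since $(\zeta_n)$ is bounded, property $\mathbb{[H]}$ (Definition \ref{CIMS}) yields a subsequence $(\zeta_{n_k})$ converging weakly to some $\zeta\in E$, and Lemma \ref{Arambasic1} upgrades this to $\|C'\zeta_{n_k}-C'\zeta\|\to 0$. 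Writing $\zeta_{n_k}=L\zeta_{n_k}+C'\zeta_{n_k}$ and letting $k\to\infty$ gives $\zeta_{n_k}\to y+C'\zeta=:z$. Continuity of $L$ then forces $Lz=\lim_k L\zeta_{n_k}=y$, so $y\in Ran(L)$. Hence $Ran(L)$ is closed.

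Once closedness is in hand, the conclusion is immediate. Since $L=I-C'$ has closed range, \cite[Theorem 3.2]{LAN} gives $E=Ker(L^*)\oplus Ran(L)$, so $Ran(L)=Ran(\lambda I-C)$ is an orthogonal summand; applying this for every nonzero $\lambda$ settles the general claim. Specializing to $\lambda=1$ and $C'=C$, and using $L^*=I-C^*$, yields the stated decomposition $E=Ker(I-C^*)\oplus Ran(I-C)$. I expect the only genuine difficulty to lie in the closed-range step, and within it the boundedness of the minimizing sequence $\zeta_n$; but that obstacle has already been isolated and resolved in Lemma \ref{Riesz1}(2), leaving only the clean weak-compactness-and-limit extraction carried out above.
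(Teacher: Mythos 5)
Your proof is correct and follows essentially the same route as the paper's: both use Lemma \ref{Riesz1} to produce the bounded minimizing sequence $\zeta_n$, extract a subsequence via property $\mathbb{[H]}$ and Lemma \ref{Arambasic1} so that $C\zeta_{n_k}$ converges, write $\zeta_{n_k}=L\zeta_{n_k}+C\zeta_{n_k}$ to get norm convergence and conclude $y\in Ran(L)$, and then invoke \cite[Theorem 3.2]{LAN}. Your explicit scaling reduction $\lambda I-C=\lambda(I-\lambda^{-1}C)$ merely spells out what the paper leaves as a one-line remark.
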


\begin{proof}
Let $y$ in the closure of $Ran(L)$ and $L=I-C$. In view of Lemma \ref{Riesz1},
there exist sequences $(x_n)$ in $E$ and $(u_n)$ in the closed submodule $Ker(L)$ of $E$ such that
$Lx_n=x_n - Cx_n$ converges to $y$, $ \zeta_n := x_n - u_n$ is a bounded sequence in $E$ and
$$ \| x_n - u_n \|=  \mathrm{ inf} \{ \| x_n - u \| \, : \ u\in Ker(L) \}.$$
Utilizing Lemma \ref{Arambasic1}, the bounded sequence $( \zeta_{n})$
has a subsequence
$( \zeta_{n_k})$ such that $ C \zeta_{n_k}$ is convergent. Hence, $ \zeta_{n_k}= L \zeta_{n_k} + C \zeta_{n_k}$ converges
to an element
$ \zeta \in E$. On the other hand, $Lx_n \to y$ and $Lx_{n_k}-Lu_{n_k} = L \zeta_{n_k} \to L \zeta $,
it follows that $y=L \zeta$, i.e., the range of $L$ is closed.
Consequently, the closed submodule $Ran(L)$ is an orthogonal summand by \cite[Theorem 3.2]{LAN}.
The range of $\lambda I-C$ is an orthogonal summand, too.
\end{proof}

\begin{lemma} \label{Wegge257}
Let $E$ be a Hilbert C*-module over a unital C*-algebra $ \mathcal{A}$. Then $E$ is
finitely generated if and only if $\mathcal{K}(E)$ is unital.
\end{lemma}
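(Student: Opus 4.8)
The plan is to reformulate everything in terms of the identity operator: I would first show that $\mathcal{K}(E)$ is unital if and only if $I_E \in \mathcal{K}(E)$, and then prove that this latter condition is equivalent to finite generation of $E$. For the reformulation, if $I_E \in \mathcal{K}(E)$ then $I_E$ is visibly a unit for $\mathcal{K}(E)$. Conversely, suppose $\mathcal{K}(E)$ has a unit $u$. Since each $\theta_{x,y}$ lies in $\mathcal{K}(E)$ we have $u\,\theta_{x,y} = \theta_{x,y}$, and evaluating on $z \in E$ gives $\theta_{ux,y} = \theta_{x,y}$, i.e. $(ux-x)\langle y,z\rangle = 0$ for all $y,z \in E$. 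Setting $w := ux - x$ and taking $y=z=w$ forces $w\langle w,w\rangle = 0$, whence $\|w\|^3 = 0$ and $ux = x$ for every $x$. Thus $u = I_E \in \mathcal{K}(E)$, and it suffices to prove that $E$ is finitely generated if and only if $I_E \in \mathcal{K}(E)$.

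For the implication $I_E \in \mathcal{K}(E) \Rightarrow E$ finitely generated I would use density of the finite-rank operators in $\mathcal{K}(E)$: choose $F = \sum_{i=1}^{n} \theta_{x_i,y_i}$ with $\|I_E - F\| < 1$. A Neumann series argument shows $F = I_E - (I_E - F)$ is invertible in $\mathcal{L}(E)$, hence surjective. Since $Fz = \sum_{i=1}^{n} x_i \langle y_i,z\rangle \in x_1\mathcal{A} + \cdots + x_n\mathcal{A}$ for every $z$, surjectivity gives $E = x_1\mathcal{A} + \cdots + x_n\mathcal{A}$, so $E$ is generated by $x_1,\dots,x_n$. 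This direction is entirely elementary.

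For the converse, write $E = x_1\mathcal{A} + \cdots + x_n\mathcal{A}$ and consider the adjointable $\mathcal{A}$-linear map $\Phi\colon \mathcal{A}^n \to E$, $\Phi(a_1,\dots,a_n) = \sum_{i} x_i a_i$, whose adjoint is $\Phi^* y = (\langle x_i,y\rangle)_i$. Set $S := \Phi\Phi^* = \sum_{i=1}^{n}\theta_{x_i,x_i} \in \mathcal{K}(E)$. The \emph{main obstacle} is to show that $S$ is invertible in $\mathcal{L}(E)$. Here $\Phi$ is surjective, hence has closed range, so by \cite[Theorem 3.2]{LAN} one obtains the orthogonal decompositions $E = Ker(\Phi^*) \oplus Ran(\Phi)$ and $\mathcal{A}^n = Ker(\Phi) \oplus Ran(\Phi^*)$. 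The first, together with $Ran(\Phi) = E$, gives $Ker(\Phi^*) = 0$; the second gives $Ran(S) = Ran(\Phi\Phi^*) = Ran(\Phi) = E$. Since $Ker(S) = Ker(\Phi^*) = 0$ and $S \geq 0$, the operator $S$ is a bijective positive element of $\mathcal{L}(E)$, hence invertible there.

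Finally, because $\mathcal{K}(E)$ is a two-sided ideal of $\mathcal{L}(E)$, from $S \in \mathcal{K}(E)$ and $S^{-1} \in \mathcal{L}(E)$ I would conclude $I_E = S\,S^{-1} \in \mathcal{K}(E)$, which by the opening reduction makes $\mathcal{K}(E)$ unital. I expect the closed-range bookkeeping establishing invertibility of $S$ to be the only genuinely nontrivial point, with the remaining steps being formal consequences of the ideal structure of $\mathcal{K}(E)$ and the density of finite-rank operators.
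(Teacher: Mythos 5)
Your proof is correct, but it takes a genuinely different route from the paper, which offers no argument at all: the paper simply cites the equivalence of statements (6) and (7) in Theorem 15.4.2 and Remark 15.4.3 of Wegge-Olsen \cite{WEG}. You instead give a self-contained proof: the reduction of unitality of $\mathcal{K}(E)$ to $I_E \in \mathcal{K}(E)$ (via $w\langle w,w\rangle = 0 \Rightarrow \|w\|^3 = 0$), the Neumann-series argument showing that a finite-rank operator within distance $1$ of $I_E$ is surjective, and the converse via the surjective adjointable map $\Phi\colon \mathcal{A}^n \to E$ with $S = \Phi\Phi^* = \sum_i \theta_{x_i,x_i}$ invertible, using \cite[Theorem 3.2]{LAN} for the two orthogonal decompositions, followed by $I_E = S S^{-1} \in \mathcal{K}(E)$ from the ideal property. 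The two steps you leave implicit, namely $Ran(\Phi\Phi^*) = Ran(\Phi)$ and $Ker(\Phi\Phi^*) = Ker(\Phi^*)$, are genuine one-liners (decompose $a = k + \Phi^* z$ in $\mathcal{A}^n = Ker(\Phi) \oplus Ran(\Phi^*)$ and apply $\Phi$; and note $\langle \Phi\Phi^* y, y\rangle = \langle \Phi^* y, \Phi^* y\rangle$), so correctness is not affected. What the paper's citation buys is brevity and, implicitly, the whole chain of equivalences in Wegge-Olsen's theorem (projectivity, being an orthogonal direct summand of some $\mathcal{A}^n$, etc.). What your argument buys is transparency and a useful clarification the paper glosses over: your converse direction needs $\Phi$ to be surjective, not merely of dense range, so ``finitely generated'' must be understood algebraically; the lemma is false for topologically finitely generated modules (for instance $E = C_0((0,1])$ over $\mathcal{A} = C([0,1])$ is topologically singly generated, yet $\mathcal{K}(E)$ is not unital).
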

The proof of the preceding lemma follows from the equivalence of statements (6) and (7), as
established in Theorem 15.4.2 and Remark 15.4.3 of Wegge-Olsen's book \cite{WEG}.
A key observation is that every finite dimensional C*-algebra $ \mathcal{A}$ is unital, which follows
from its structure as a direct sum of full matrix algebras, each of which is unital.

In the following results, we unify key matrix-theoretic properties and extend them to
characterize the spectral behavior of compact operators on Hilbert C*-modules.
\begin{remark} \label{Conwayp215.1}
If $E$ is a Hilbert $ \mathcal{ A}$-module that is not finitely generated and $C \in \mathcal{K}(E)$ is compact modular operator,
then $0$ belongs to the spectrum of $C$, denoted by
$$ \sigma (C) :=  \{ \lambda \in \mathbb{C} :~ \lambda I -C {\rm ~is ~not ~invertible~in}~ \mathcal{L}(E) \}.$$
This fact follows immediately from Lemma \ref{Wegge257}, see also \cite[Lemma 2.5]{SharifiInv}.
\end{remark}

\begin{lemma} \label{RieszK1}
Let $E$ be a Hilbert C*-module over a unital C*-algebra $ \mathcal{A}$ and
$C \in \mathcal{K}(E)$. Then $Ker(I-C)=\{x \in E:~Cx=x \}$ is a finitely generated Hilbert
$ \mathcal{ A}$-module.
\end{lemma}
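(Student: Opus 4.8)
The plan is to identify $N := Ker(I-C) = Ker(L)$ with a Hilbert $\mathcal{A}$-module on which the identity operator is compact, and then to invoke Lemma \ref{Wegge257}. The guiding principle is the classical fact that the eigenspace of a compact operator for a nonzero eigenvalue is finite dimensional; here ``finite dimensional'' is replaced by ``finitely generated,'' and the finite-dimensionality criterion ``the identity is a compact operator'' becomes ``$\mathcal{K}(N)$ is unital.''

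First I would record the two structural facts about $N$. Since $N$ is a closed submodule of $E$, it is itself a Hilbert $\mathcal{A}$-module with the inherited inner product. Moreover, by Lemma \ref{Conwayp215} the operator $L=I-C$ has closed range, so that $E = Ker(L) \oplus Ran(L^*)$ by \cite[Theorem 3.2]{LAN} (as recalled in the preliminaries); in particular $N = Ker(L)$ is orthogonally complemented in $E$. Let $P \in \mathcal{L}(E)$ denote the orthogonal projection of $E$ onto $N$.

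Next I would show that the identity $I_N$ of $N$ is compact, i.e. lies in $\mathcal{K}(N)$. Observe that $N$ is $C$-invariant, since $Cx = x \in N$ for every $x \in N$, and that $C|_N = I_N$. Writing $C$ as a norm limit of finite-rank operators $\sum_i \theta_{u_i,v_i}$ and compressing by $P$, the identity $P\theta_{u,v}P = \theta_{Pu,Pv}$ gives $PCP = \lim \sum_i \theta_{Pu_i, Pv_i}$, a norm limit of rank-one operators whose defining vectors lie in $N$; its restriction to $N$ therefore belongs to $\mathcal{K}(N)$. On the other hand, for $x \in N$ we have $PCPx = PCx = Cx = x$, because $Px = x$ and $Cx \in N$, so $(PCP)|_N = I_N$. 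Hence $I_N \in \mathcal{K}(N)$, the algebra $\mathcal{K}(N)$ is unital, and Lemma \ref{Wegge257} yields that $N$ is finitely generated.

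I expect the main obstacle to be the transition between compact operators on the ambient module $E$ and those on the submodule $N$: one must ensure that $N$ is orthogonally complemented (so that $P$ exists and $N$ carries a genuine Hilbert-module structure on which compression makes sense) and that the compression of a compact operator restricts to a \emph{compact} operator on $N$, rather than merely to a bounded one. The closed-range conclusion of Lemma \ref{Conwayp215} is precisely what secures the orthogonal complementation, while the rank-one approximation of $C$ together with $P\theta_{u,v}P = \theta_{Pu,Pv}$ guarantees compactness of the compression; once these are in place, the identification $(PCP)|_N = I_N$ follows at once from $Cx = x$.
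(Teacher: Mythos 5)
Your argument has the same skeleton as the paper's: the restriction of $C$ to $N=Ker(I-C)$ is the identity, so once $I_N$ is known to lie in $\mathcal{K}(N)$, Lemma \ref{Wegge257} gives finite generation. What you add is the compression step. The paper simply asserts that $C_{|_{Ker(L)}}$ is ``a compact modular operator'' on $Ker(L)$, whereas you correctly treat this as the crux: in a Hilbert C*-module the restriction of a compact operator to a closed invariant submodule need not be compact on that submodule, and your repair via the orthogonal projection $P$ and the identity $P\theta_{u,v}P=\theta_{Pu,Pv}$, giving $(PCP)_{|_N}=I_N\in\mathcal{K}(N)$, is sound. Under the additional hypothesis that $E$ satisfies property $\mathbb{[H]}$, every step of your proof checks out.

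The gap, relative to the statement as given, is that Lemma \ref{RieszK1} does not assume property $\mathbb{[H]}$, while your appeal to Lemma \ref{Conwayp215} (closed range of $L$, hence $E=Ker(L)\oplus Ran(L^*)$, hence the existence of $P$) requires it; as written you have proved a weaker statement. What is worth knowing is that this mismatch cannot be repaired, because the lemma as stated is false. Take $\mathcal{A}=E=C[0,1]$ as a Hilbert module over itself: $\mathcal{A}$ is unital but infinite dimensional, so $E$ is full and fails $\mathbb{[H]}$ by \cite[Theorem 2.3]{Arambasic}. Here $\mathcal{K}(E)\cong\mathcal{A}$ acting by multiplication, so $C:=$ multiplication by $a(t)=\min(2t,1)$ is compact, and $N=Ker(I-C)=\{f\in C[0,1]:\ f\equiv 0$ on $[0,1/2]\}$. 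This $N$ is not orthogonally complemented (indeed $N^{\perp}=\{f:\ f\equiv 0$ on $[1/2,1]\}$, and $N+N^{\perp}$ omits every $f$ with $f(1/2)\neq 0$), so your projection $P$ does not exist; worse, $\mathcal{K}(N)\cong C_0((1/2,1])$ is not unital, so by Lemma \ref{Wegge257} itself $N$ is not finitely generated. (Directly: if $N=\sum_{i=1}^{n}f_i\mathcal{A}$, put $h=\sum_i|f_i|\in N$; then $\sqrt{h}\in N$ gives $\sqrt{h}\leq Mh$ pointwise for some constant $M$, forcing $h\geq 1/M^2$ on $\{h>0\}$, which contradicts continuity of $h$ and $h_{|_{[0,1/2]}}=0$ unless $h\equiv 0$.) So the unjustified step is precisely the one you refused to take for granted, and it is where the paper's own proof breaks down; your proof is the correct one for the corrected statement, and property $\mathbb{[H]}$ is in any case in force everywhere the paper actually uses this lemma (Theorem \ref{RieszK2}, Corollary \ref{RieszK5}, Theorem \ref{RieszK6}).
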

\begin{proof} The kernel of the bounded linear operator $L=I-C$ is a closed submodule of $E$.
Since $Cx=x$, for every $x$ in the kernel of $L$, the restriction of $C$ to
$Ker(I-C)$ coincides with the identity operator, that is, $I=C_{|_{Ker(L)}}: Ker(L) \to Ker(L)$
is a compact modular operator from $Ker(L)$ onto $Ker(L)$. Hence, the closed $ \mathcal{A}$-submodule
$Ker(L)$ is a finitely generated Hilbert $ \mathcal{A}$-module by Lemma \ref{Wegge257}.
\end{proof}

\begin{theorem} \label{RieszK2}
Let $E$ be a Hilbert C*-module over a unital C*-algebra $ \mathcal{A}$ that satisfies
property $\mathbb{[H]}$ and $C \in \mathcal{K}(E)$. If $L=I-C$, then
there exists a uniquely determined non-negative integer $r$ such that
\begin{equation}\label{Riesz3}
\{0 \}= Ker(L^0) \subset Ker(L^1) \subset \cdots \subset Ker(L^r) = Ker(L^{r+1})= \cdots ,
\end{equation}
and
\begin{equation}\label{Riesz4}
E= Ran(L^0) \supset Ran(L^1) \supset \cdots \supset Ran(L^r) = Ran(L^{r+1})= \cdots .
\end{equation}
\end{theorem}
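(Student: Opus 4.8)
The plan is to adapt the classical Riesz ascent--descent argument to the module setting, replacing ``finite dimension'' by ``finitely generated'' and ``closed subspace'' by ``orthogonal summand,'' and channeling every compactness step through property $\mathbb{[H]}$ via Lemma~\ref{Arambasic1}. The first point I would record is that each power stays inside the class already treated: since $L^{n}=(I-C)^{n}=I-C_{n}$ with $C_{n}:=\sum_{k=1}^{n}\binom{n}{k}(-1)^{k+1}C^{k}$ a finite sum of powers $C^{k}$, $k\ge 1$, of the compact operator $C$, we have $C_{n}\in\mathcal{K}(E)$. Hence Lemma~\ref{Conwayp215} makes every $Ran(L^{n})$ a closed orthogonal summand and Lemma~\ref{RieszK1} makes every $Ker(L^{n})$ finitely generated, while $L^{n+1}=L\,L^{n}=L^{n}L$ gives the monotonicity $Ker(L^{n})\subseteq Ker(L^{n+1})$ and $Ran(L^{n+1})\subseteq Ran(L^{n})$.

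Next I would show that both chains stabilize. For the kernels, suppose for contradiction that $Ker(L^{n})$ is a proper submodule of $Ker(L^{n+1})$ for every $n$; applying Riesz's Lemma~\ref{Riesz0} inside the Hilbert module $Ker(L^{n+1})$ to its proper closed submodule $Ker(L^{n})$ yields unit vectors $x_{n+1}\in Ker(L^{n+1})$ at distance at least $\tfrac12$ from $Ker(L^{n})$. Writing $C=I-L$, the identity $Cx_{m}-Cx_{n}=x_{m}-(Lx_{m}+x_{n}-Lx_{n})$ has its bracketed term in $Ker(L^{m-1})$ whenever $m>n$, so $\|Cx_{m}-Cx_{n}\|\ge\tfrac12$; this contradicts the existence, guaranteed by compactness of $C$ and property $\mathbb{[H]}$ through Lemma~\ref{Arambasic1}, of a subsequence along which $(Cx_{n})$ converges. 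Thus there is a least $p$ with $Ker(L^{p})=Ker(L^{p+1})$, and the one-line induction ``$x\in Ker(L^{p+2})\Rightarrow Lx\in Ker(L^{p+1})=Ker(L^{p})\Rightarrow x\in Ker(L^{p+1})$'' freezes the chain from $p$ onward. The range chain is symmetric: if each $Ran(L^{n+1})$ were a proper submodule of $Ran(L^{n})$, Riesz's lemma inside $Ran(L^{n})$ gives unit vectors $y_{n}$ at distance at least $\tfrac12$ from $Ran(L^{n+1})$, and the same identity---now with the bracketed term in $Ran(L^{m+1})$ for $m<n$---contradicts convergence of a subsequence of $(Cy_{n})$, producing a least $q$ at which the range chain stabilizes.

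The crux, and the step I expect to be the main obstacle, is to prove that these two indices agree, so that a single $r:=p=q$ satisfies both \eqref{Riesz3} and \eqref{Riesz4}. My plan is first to establish the one-sided inequality $q\le p$ and then to invoke the adjoint for the reverse. For $q\le p$, I would use that $L$ is injective on $Ran(L^{p})$: if $x=L^{p}y\in Ran(L^{p})$ satisfies $Lx=0$, then $L^{p+1}y=0$, so $y\in Ker(L^{p+1})=Ker(L^{p})$ and hence $x=L^{p}y=0$. Since an injective map sends proper inclusions to proper inclusions, if $Ran(L^{p+1})$ were a proper submodule of $Ran(L^{p})$ then applying $L$ repeatedly would force $Ran(L^{p+k+1})$ to be properly contained in $Ran(L^{p+k})$ for all $k$, contradicting the finiteness of the descent $q$ established above. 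Hence $q\le p$.

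Finally I would pass to $L^{*}=I-C^{*}$, which satisfies the same hypotheses because $C^{*}\in\mathcal{K}(E)$, and use the orthogonal-complement dictionary on the (closed, complemented) ranges: $Ran(L^{n})^{\perp}=Ker((L^{*})^{n})$ and $Ker(L^{n})^{\perp}=Ran((L^{*})^{n})$, the latter being closed because $(L^{*})^{n}=I-C_{n}^{*}$. Taking orthogonal complements therefore turns stabilization of the range chain of $L$ into stabilization of the kernel chain of $L^{*}$, and stabilization of the kernel chain of $L$ into stabilization of the range chain of $L^{*}$; that is, the descent $q$ of $L$ equals the ascent of $L^{*}$, and the ascent $p$ of $L$ equals the descent of $L^{*}$. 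Applying the already-proved inequality (descent $\le$ ascent) to $L^{*}$ then reads $p\le q$, which together with $q\le p$ yields $p=q=:r$. Uniqueness of $r$ is automatic, being the least index of stabilization of either chain. The points I would take most care with are the inheritance of closedness and complementation by the powers $L^{n}$ (so that Lemma~\ref{Conwayp215} is legitimately reused for each of them) and the bookkeeping of indices in the two distance estimates; the remainder is formal.
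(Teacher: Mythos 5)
Your proposal is correct, and the two stabilization arguments (kernel chain and range chain via Lemma~\ref{Riesz0} plus the $\|Cx_m - Cx_n\|\geq \tfrac12$ estimate contradicting Lemma~\ref{Arambasic1}) coincide with the paper's. Where you genuinely diverge is the crucial final step, the equality of the ascent $p$ and the descent $q$. The paper proves $r=s$ by two direct element-chasing contradictions: assuming $r>s$ it produces $Ker(L^{r})=Ker(L^{r-1})$, violating minimality of $r$, and assuming $r<s$ it produces $Ran(L^{s})=Ran(L^{s-1})$, violating minimality of $s$; this is entirely self-contained and uses only the stabilized chains themselves. You instead prove the single inequality $q\leq p$ (via injectivity of $L$ on $Ran(L^{p})$ and the fact that an injective map preserves proper inclusions), and then obtain $p\leq q$ by passing to $L^{*}=I-C^{*}$ and using the dictionary $Ran(L^{n})^{\perp}=Ker((L^{*})^{n})$, $Ker(L^{n})^{\perp}=Ran((L^{*})^{n})$, which is legitimate here precisely because each $L^{n}=I-C_{n}$ has closed, orthogonally complemented range (Lemma~\ref{Conwayp215} together with Lance's theorem), so that $F^{\perp\perp}=F$ for the submodules involved. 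Your route buys a conceptual symmetry (ascent/descent are swapped under adjoints) and makes explicit the point, only implicit in the paper at this stage, that each power $L^{n}$ is again of the form identity minus compact; its cost is that it leans on the orthogonal-complementation machinery, which the paper's proof of this particular theorem avoids entirely and only brings in later, in Theorem~\ref{RieszK3}, to get the EP property. Both are complete proofs; yours front-loads the duality that the paper postpones.
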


\begin{proof}
Since for each $x$ with $Lx=0$ it follows that $L^{n+1}x=0$, we trivially have
$$ \{0 \}= Ker(L^0) \subset Ker(L^1) \subset Ker(L^2) \subset \cdots.$$
Suppose, for contradiction, we assume that  $Ker(L^n) \neq Ker(L^{n+1})$, for every natural number $n$.
We apply Lemma \ref{Riesz0} for the finitely generated submodule $Ker(L^n)$, there exists a sequence
$(x_n)$ in $Ker(L^{n+1})$ such that $\|x_n \|=1$ and
\begin{equation}\label{Riesz5}
\|x_n - x \|\geq \|x_n - Ker(L^n) \| \geq \frac{1}{2},
\end{equation}
for every $x \in Ker(L^n)$. For every $n > m$ we have
\begin{equation*}
Cx_n - Cx_m= x_n - (x_m + Lx_n - Lx_m),
\end{equation*}
and
\begin{equation*}
L^n(x_m+Lx_n-Lx_m)=L^{n-m-1} (L^{m+1} x_m)+L^{n+1}x_n -L^{n-m}(L^{m+1}x_m)=0,
\end{equation*}
which imply  $x_m+Lx_n-Lx_m \in Ker(L^n)$. In view of (\ref{Riesz5}), we have
\begin{equation*}
\|Cx_n - Cx_m \|= \| x_n - (x_m + Lx_n - Lx_m)\| \geq \frac{1}{2},~{\rm for~ all ~ } n >m.
\end{equation*}
Consequently, the sequence $Cx_n$ does not contain a convergent subsequence
which is a contradiction to the compactness of $C$ and the property $\mathbb{[H]}$.
Thus, we now conclude that in the sequence $Ker(L^n)$, there exist two consecutive kernels that are equal.
Suppose $r$ is the smallest natural number $m$ such that the equality $ Ker(L^m) = Ker(L^{m+1})$ holds, then
$$Ker(L^r) = Ker(L^{r+1})=  Ker(L^{r+2}) = \cdots.$$ To see this, let $x \in  Ker(L^{r+2})$ then $L^{r+1} (Lx)=0$,
and so $Lx \in  Ker(L^{r+1})=Ker(L^r)$. We can now conclude inductively that
$ Ker(L^n) = Ker(L^{n+1})$ for every $n\geq r$.

To see the second assertion,  let $F_n=Ran(L^n)$. Then the $F_n$'s are closed $ \mathcal{A}$-submodules by
Lemma \ref{Conwayp215} that satisfy $F_{n+1} \subseteq F_n$ for every nonnegative integer $n$.
Suppose otherwise, we assume that  $F_{n+1} \neq F_n$, for every natural number $n$.
We apply Lemma \ref{Riesz0} for the closed submodule $F_{n+1}$, there exists a sequence
$(y_n)$ in $F_n$ such that $\|y_n \|=1$ and
\begin{equation}\label{Riesz6}
\|y_n - y \|\geq \|y_n - F_{n+1} \| \geq \frac{1}{2},
\end{equation}
for every $y \in F_{n+1}$. For every $m > n$ we have
\begin{equation*}
Cy_n - Cy_m= y_n - (y_m + Ly_n - Ly_m),
\end{equation*}
and
\begin{equation}\label{Riesz7}
y_m + Ly_n - Ly_m=L^{n+1}(L^{m-n-1}x_m+x_n-L^{m-n}x_m),
\end{equation}
where $y_n=L^n x_n$. The equality (\ref{Riesz7}) implies that $y_m + Ly_n - Ly_m \in F_{n+1}$ and so
\begin{equation*}
\|Cy_n - Cy_m \|= \| y_n - (y_m + Ly_n - Ly_m) \| \geq \frac{1}{2},~{\rm for~ all ~ } m >n,
\end{equation*}
which derives a contradiction again to the compactness of $C$ and the property $\mathbb{[H]}$.
Hence, we now deduce that in the sequence $F_n$, there exist two consecutive ranges that are equal.
Suppose $s$ is the smallest natural number $m$ such that the equality $Ran(L^m) = Ran(L^{m+1})$ holds, then
$$Ran(L^s) = Ran(L^{s+1}) = Ran(L^{s+2}) = \cdots .$$
To see this, let $y=L^{s+1}x \in  Ran(L^{s+1})$ then $L^{s}x=L^{s+1} x_0$
for some $x_0 \in E$, since $Ran(L^s) = Ran(L^{s+1})$.
Consequently,  $y=L^{s+2}x_0 \in  Ran(L^{s+2})$ and so $Ran(L^{s+1}) \subseteq Ran(L^{s+2})$.
We can now conclude inductively that
$ Ran(L^n) = Ran(L^{n+1})$ for every $n\geq s$.

We claim that $r=s$. Let $r>s$ and $x \in Ker(L^r)$, then $L^{r-1}x \in Ran(L^{r-1})=Ran(L^r)$ and so
$0=L^r x= L(L^{r-1}x)= L(L^r x_0)=L^{r+1}x_0$. Thus, $x_0 \in Ker(L^{r+1})=Ker(L^r)$, which implies
$L^{r-1}x=L^r x=0$. Consequently, $ Ker(L^r) = Ker(L^{r-1})$, which contradicts the choice of $r$. We now
assume that $r < s$ and $y=L^{s-1} x \in Ran(L^{s-1})$. Then $Ly=L^{s} x \in Ran(L^{s})=Ran(L^{s+1})$, we therefore have
$Ly= L^{s+1}x_0$ and
$$L^s(x-Lx_0)=Ly - L^{s+1}x_0=0,$$ for some $x_0 \in E$.
Since $Ker(L^{s-1})= Ker(L^s)$, we have $L^{s-1}(x-Lx_0)=0$ and so $y=L^sx_0 \in Ran(L^s)$.
Consequently, $ Ran(L^s) = Ran(L^{s-1})$, which contradicts the definition of $s$.

\end{proof}

Let $E$ be a Hilbert
$ \mathcal{A}$-modules and let $T \in \mathcal{L}(E)$. Then a bounded adjointable operator
$T^{ \dag} \in \mathcal{L}(E)$ is called the {\it Moore-Penrose
inverse} of $T$ if
\begin{equation*}
T \, T^{ \dag}T=T, \ T^{ \dag}T \, T^{ \dag}= T^{ \dag}, \ (T \,
T^{ \dag})^*=T \, T^{ \dag} \ {\rm and} \ ( T^{ \dag} T)^*= T^{
\dag} T.
\end{equation*}
Xu and Sheng in \cite{Xu/Sheng} have shown that a bounded
adjointable operator between two Hilbert C*-modules admits a
bounded Moore-Penrose inverse if and only if the operator has
closed range. The reader should be aware of the fact that a
bounded adjointable operator may admit an unbounded operator as
its Moore-Penrose, see \cite{FS2, SHA/Groetsch}.

An operator $T \in \mathcal{L}(E)$ is
called {\it EP} if $Ran(T)$ and $Ran(T^*)$ have the same closure.
Let $T \in \mathcal{L}(E)$ be an EP operator with
closed range,  then $T$ commutes with the bounded operator $T^{ \dag}$;
see \cite{SHA/EP, SharifiErr} for details and some useful examples.

\begin{theorem} \label{RieszK3}
Let $E$ be a Hilbert C*-module over a unital C*-algebra $ \mathcal{A}$ that satisfies
property $\mathbb{[H]}$ and $C \in \mathcal{K}(E)$. If $L=I-C$, then
there exists a uniquely determined non-negative integer $r$ such that
\begin{equation}\label{Riesz8}
E=Ker(L^r) \oplus Ran(L^r).
\end{equation}
In particular, there exists a uniquely determined non-negative integer $r$ such that
$L^r$ is an EP operator.
\end{theorem}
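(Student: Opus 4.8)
The plan is to let $r$ be the stabilization index furnished by Theorem \ref{RieszK2} and to show that this same $r$ realizes \eqref{Riesz8}, after which the EP assertion drops out of the closed-range theory recalled just above. The first observation I would record is that every power of $L$ is again a compact perturbation of the identity: expanding $L^n=(I-C)^n=I+\sum_{k=1}^{n}\binom{n}{k}(-C)^k$ exhibits $L^n=I-C_n$ with $C_n:=-\sum_{k=1}^{n}\binom{n}{k}(-C)^k$ a finite sum of compact operators, hence $C_n\in\mathcal{K}(E)$. This lets me feed $C_n$ into the earlier lemmas: Lemma \ref{Conwayp215} shows $Ran(L^n)$ is an orthogonal summand (in particular closed) for every $n$, and Lemma \ref{RieszK1} shows $Ker(L^n)$ is finitely generated. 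In particular $L^r$ has closed range and $Ker(L^r)$ is a finitely generated submodule.

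Next I would produce the direct sum $E=Ker(L^r)\dot{+}Ran(L^r)$ purely from the stabilization \eqref{Riesz3}--\eqref{Riesz4}. If $x\in Ker(L^r)\cap Ran(L^r)$, write $x=L^r y$; then $L^{2r}y=L^r x=0$ gives $y\in Ker(L^{2r})=Ker(L^r)$, so $x=L^r y=0$. Conversely, for arbitrary $x\in E$ the element $L^r x$ lies in $Ran(L^r)=Ran(L^{2r})$, say $L^r x=L^{2r}z$; then $x-L^r z\in Ker(L^r)$ and $x=(x-L^r z)+L^r z$. Since both summands are closed submodules this is a genuine topological direct sum, and the uniqueness of $r$ is inherited from Theorem \ref{RieszK2}.

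The substantive step, which I expect to be the main obstacle, is upgrading this to the \emph{orthogonal} decomposition $E=Ker(L^r)\oplus Ran(L^r)$, equivalently showing that $L^r$ is EP. Because $L^r$ has closed range, the decompositions $E=Ker(L^r)\oplus Ran((L^r)^*)$ and $E=Ker((L^r)^*)\oplus Ran(L^r)$ hold, so that $Ran((L^r)^*)=Ker(L^r)^{\perp}$ and $Ran(L^r)=Ker((L^r)^*)^{\perp}$; hence $L^r$ is EP precisely when $Ker(L^r)=Ker((L^r)^*)$, i.e. when the two closed submodules $Ker(L^r)$ and $Ran(L^r)$ are mutually orthogonal. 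The natural attack is to apply Theorem \ref{RieszK2} to the adjoint $L^*=I-C^*$ (legitimate since $C^*\in\mathcal{K}(E)$) and to compare the stable kernels of $L$ and $L^*$; equivalently, one must prove that the bounded Riesz idempotent projecting onto $Ker(L^r)$ along $Ran(L^r)$ is self-adjoint. This is the delicate heart of the matter: a mere coincidence of the stabilization indices for $L$ and for $L^*$ does not by itself force the kernels to agree, so the argument must genuinely exploit the orthogonal-summand structure of $Ran(L^r)$ together with the compactness of $C$, and it is here that I would expect to spend the real effort.
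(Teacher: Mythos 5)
Your proposal is incomplete at exactly the point you yourself flag as ``the delicate heart of the matter'': you reduce the theorem to showing $Ker(L^r)\perp Ran(L^r)$ (equivalently $Ker(L^r)=Ker(L^{*r})$), announce that real effort is needed there, and stop. So, judged as a proof, it has a genuine gap. Everything up to that point --- writing $L^n=I-C_n$ with $C_n$ compact, the closedness of $Ran(L^r)$, and the algebraic derivation of the topological direct sum $E=Ker(L^r)\,\dot{+}\,Ran(L^r)$ from $Ker(L^{2r})=Ker(L^r)$ and $Ran(L^{2r})=Ran(L^r)$ --- coincides with the paper's own proof, which obtains precisely this decomposition \eqref{Riesz9} and then invokes \cite[Theorem 3.2]{LAN} to get the two orthogonal decompositions \eqref{Riesz10}, namely $E=Ker(L^{*r})\oplus Ran(L^r)=Ker(L^r)\oplus Ran(L^{*r})$.

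However, you should know how the paper closes the gap: it asserts that \eqref{Riesz9} and \eqref{Riesz10} together force $Ran(L^{*r})=Ran(L^r)$, and that inference is a non sequitur --- your suspicion that such a coincidence ``does not by itself force the kernels to agree'' is correct, and in fact no argument can close the gap for the stabilization index $r$ of Theorem \ref{RieszK2}. Take $E=\mathbb{C}^2$ as a Hilbert $\mathbb{C}$-module (property $\mathbb{[H]}$ holds by Lemma \ref{CIMS2}), $L=\left(\begin{smallmatrix} 1 & 1 \\ 0 & 0 \end{smallmatrix}\right)$ and $C=I-L\in\mathcal{K}(E)$. Then $L^2=L$, so the index furnished by Theorem \ref{RieszK2} is $r=1$; here $Ker(L)=\mathrm{span}\{(1,-1)\}$ and $Ran(L)=\mathrm{span}\{(1,0)\}$, so \eqref{Riesz9} holds, and \eqref{Riesz10} holds because $Ran(L)$ is closed, yet $Ran(L^*)=\mathrm{span}\{(1,1)\}\neq Ran(L)$: this $L$ is not EP, and its Riesz decomposition is topological but not orthogonal. (The literal statement of the theorem survives only degenerately in this example, via $r=0$, but that is not the $r$ the proof constructs, nor the one used later in Theorem \ref{RieszK6}.) So the step you declined to fake is exactly the step at which the paper's own argument fails: as in the classical Riesz theory on Hilbert spaces, the decomposition at the stabilization index is in general only a topological direct sum $\dot{+}$, not an orthogonal one $\oplus$, and your reduction of EP-ness to $Ker(L^r)=Ker(L^{*r})$ makes the obstruction precise.
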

\begin{proof}
We adhere to the notations and follow the reasoning used in the proof of the previous theorem. Suppose
$y \in Ker(L^r) \cap Ran(L^r)$. Then there exists $x \in E$ such that $y=L^r x$ and $L^{2r}x=0$, that is,
$x \in Ker(L^2r)=  Ker(L^r)$ and so $y=L^r x=0$. We now suppose $x$ is an arbitrary element in $E$. Then
$L^r x \in Ran(L^r)=Ran(L^{2r})$ and so there exists $x_0$ such that $L^rx=L^{2r}x_0$. We set $y=L^{r}x_0$
and $z=x-y$, then $L^rz=L^rx- L^{2r}x_0=0$, which means $x=z+y \in Ker(L^r)  \dot{+} Ran(L^r)$. Hence,
\begin{equation}\label{Riesz9}
E=Ker(L^r) \dot{+} Ran(L^r).
\end{equation}
In view of the closedness of the range $L^r$ and \cite[Theorem 3.2]{LAN}, we have
\begin{equation}\label{Riesz10}
E=Ker(L^{*r}) \oplus Ran(L^r) = Ker(L^r) \oplus Ran(L^{*r}).
\end{equation}
According to (\ref{Riesz9}) and (\ref{Riesz10}) we infer, $Ran(L^{*r})=Ran(L^{r})$, that is $L^r$
is an EP operator with closed range.
\end{proof}

\begin{corollary} \label{RieszKEP}
Let $E$ be a Hilbert C*-module over a unital C*-algebra $ \mathcal{A}$ that satisfies
property $\mathbb{[H]}$ and $C \in \mathcal{K}(E)$. If $L=I-C$, then the EP operator $L^r$
admits the matrix form
\begin{equation*}
L^r=\begin{bmatrix} I_{r} & 0 \\ 0 & 0 \end{bmatrix}:
\begin{bmatrix} Ran(L^r) \\ Ker(L^r) \end{bmatrix} \to
\begin{bmatrix} Ran(L^r)  \\ Ker(L^r)  \end{bmatrix},
\end{equation*}
where, $I_{r}$ is an invertible in $\mathcal{L}(Ran(L^r),Ran(L^r))$. Indeed, $I_{r}$ is the identity operator on $Ran(L^r)$.
\end{corollary}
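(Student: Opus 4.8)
The plan is to read off the operator matrix of $L^r$ directly from the orthogonal decomposition furnished by Theorem \ref{RieszK3}. By that theorem $L^r$ is EP with closed range and $E=Ker(L^r)\oplus Ran(L^r)$, the two summands being mutually orthogonal because $Ran(L^r)=Ran(L^{*r})=Ker(L^r)^{\perp}$. Writing an arbitrary $x\in E$ as $x=p+k$ with $p\in Ran(L^r)$ and $k\in Ker(L^r)$, we have $L^r x=L^r p$ since $L^r k=0$. Thus the whole computation reduces to understanding how $L^r$ acts on the single summand $Ran(L^r)$, and the off-diagonal and lower-right blocks are controlled by the two inclusions $L^r\big(Ker(L^r)\big)=\{0\}$ and $L^r\big(Ran(L^r)\big)\subseteq Ran(L^r)$.

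First I would establish the $L^r$-invariance of $Ran(L^r)$. By Theorem \ref{RieszK2} the ranges stabilize at $r$, so $Ran(L^{2r})=Ran(L^r)$, whence $L^r\big(Ran(L^r)\big)=Ran(L^{2r})=Ran(L^r)$. Consequently the $Ker(L^r)$-component of $L^r p$ vanishes for every $p\in Ran(L^r)$, while $L^r k=0$ kills the other column, and the two diagonal blocks are $I_r:=L^r|_{Ran(L^r)}$ and $0$. Since both $Ran(L^r)$ and its orthogonal complement $Ker(L^r)$ are $L^r$-invariant, the submodule $Ran(L^r)$ reduces $L^r$, so $I_r$ is a genuine adjointable operator on the Hilbert module $Ran(L^r)$. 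This already yields the displayed block form $\begin{bmatrix} I_r & 0 \\ 0 & 0\end{bmatrix}$ relative to $\begin{bmatrix} Ran(L^r) \\ Ker(L^r)\end{bmatrix}$.

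Next I would prove that $I_r$ is invertible in $\mathcal{L}(Ran(L^r))$. Surjectivity is precisely the identity $L^r\big(Ran(L^r)\big)=Ran(L^r)$ just used. For injectivity, if $p\in Ran(L^r)$ satisfies $L^r p=0$ then $p\in Ker(L^r)\cap Ran(L^r)=\{0\}$, the triviality of this intersection having been recorded in the proof of Theorem \ref{RieszK3}. Thus $I_r$ is a bounded adjointable bijection of the closed submodule $Ran(L^r)$ onto itself; its Moore--Penrose inverse exists by Xu and Sheng's closed-range criterion \cite{Xu/Sheng}, and for a bijection it coincides with a two-sided inverse, so $I_r^{-1}\in\mathcal{L}(Ran(L^r))$. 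This settles the substantive content of the corollary.

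The step I expect to be the real obstacle is the final clause asserting that $I_r$ is the identity operator on $Ran(L^r)$; I do not believe this holds in general, and I would treat it as the point demanding the most scrutiny. On $Ran(L^r)$ the operator $L=I-C$ is invertible (it maps $Ran(L^r)=Ran(L^{r+1})$ onto itself and has kernel contained in $Ker(L^r)\cap Ran(L^r)=\{0\}$), so $I_r=\big(L|_{Ran(L^r)}\big)^r$ equals the identity only when every eigenvalue of this invertible restriction has $r$-th power $1$, which fails typically. Already in the finite-dimensional instance $E=\mathbb{C}^3$, $\mathcal{A}=\mathbb{C}$, which satisfies property $\mathbb{[H]}$, one may choose $C$ so that $L=I-C$ carries a size-two Jordan block at $0$ together with the eigenvalue $2$; then $r=2$, the summand $Ran(L^2)$ is the eigenline for the eigenvalue $2$, and $L^2$ acts there as multiplication by $4$, not as the identity. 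I would therefore prove only the invertibility of $I_r$, and I would flag the assertion ``$I_r$ is the identity'' as an overstatement valid solely in degenerate situations (for example $r=0$, where $L$ is already invertible and $L^0=I$).
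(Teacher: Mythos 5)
Your proof of the block form and of the invertibility of $I_r$ is correct, but it takes a genuinely different route from the paper's. The paper's proof is a one-line citation: the closed range and EP property of $L^r$ supplied by Theorem \ref{RieszK3} are fed into Lemma 3.6 of \cite{SHA/EP}, which characterizes closed-range EP operators as exactly those admitting a diagonal block form with respect to $Ran\oplus Ker$ with an \emph{invertible} upper-left corner. You instead re-prove that instance of the lemma from ingredients internal to this paper: orthogonality of the summands via $Ran(L^r)=Ran(L^{r*})=Ker(L^r)^{\perp}$ (Theorem 3.2 of \cite{LAN}), adjointability of the restriction because $Ran(L^r)$ reduces $L^r$, surjectivity of $I_r$ from the range stabilization $Ran(L^{2r})=Ran(L^r)$ of Theorem \ref{RieszK2}, and injectivity from $Ker(L^r)\cap Ran(L^r)=\{0\}$. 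This buys self-containedness at the cost of redoing work the paper outsources to \cite{SHA/EP}. One simplification: you do not need Xu--Sheng \cite{Xu/Sheng} to invert $I_r$; a bijective adjointable operator has a bounded inverse by the open mapping theorem, and that inverse is adjointable with $(I_r^{-1})^*=(I_r^*)^{-1}$, since $I_r^*$ is bijective as well.

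Your scepticism about the last clause is also justified: the assertion that $I_r$ is the identity operator on $Ran(L^r)$ is false in general, and your counterexample is valid. On $E=\mathbb{C}^3$ over $\mathcal{A}=\mathbb{C}$ (which satisfies property $\mathbb{[H]}$ by Lemma \ref{CIMS2}), take $L$ with a nilpotent Jordan block of size two together with the eigenvalue $2$; then $C=I-L$ is compact, $r=2$, $Ran(L^2)$ is the eigenline for the eigenvalue $2$, and $I_r$ acts there as multiplication by $4$. Indeed, $I_r$ is the identity precisely when $L^{2r}=L^r$, i.e., when $L^r$ is idempotent, which nothing in the hypotheses forces. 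Note that the paper's own argument cannot deliver this clause either: Lemma 3.6 of \cite{SHA/EP} yields invertibility of the corner block and nothing more. The final sentence of the corollary is an error in the paper (the identity corner belongs to $L^r(L^r)^{\dag}$ and $(L^r)^{\dag}L^r$, not to $L^r$ itself), not a gap in your argument.
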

\begin{proof}
The matrix form follows from the closedness of the range of $L^r$, Theorem \ref{RieszK3}, and Lemma 3.6 of \cite{SHA/EP}.
\end{proof}

In the following two corollaries, we present a reformulation of the classical results of Riesz
and Fredholm \cite{Fredholm,Riesz} for Hilbert modules with property $\mathbb{[H]}$, separately analyzing the cases $r=0$ and $r>0$.

\begin{corollary} \label{RieszK4}
Let $E$ be a Hilbert C*-module over a unital C*-algebra $ \mathcal{A}$ that satisfies
property $\mathbb{[H]}$ and $C \in \mathcal{K}(E)$. Let $L=I-C$ is injective, then:
\begin{enumerate}
  \item The modular operator $L$ is surjective.
  \item The homogeneous equation $x-Cx=0$ only has the trivial solution $x=0$, and
  for all $f \in E$, the inhomogeneous equation
  $$x-Cx=f$$
has a unique solution $x\in E$. Moreover, this solution depends continuously on $f$.
\end{enumerate}
\end{corollary}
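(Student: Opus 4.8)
The plan is to show that injectivity of $L$ forces the stabilization index $r$ of Theorem \ref{RieszK2} to be $0$, from which surjectivity drops out immediately, and then to upgrade the resulting algebraic bijection to a bounded adjointable inverse so that uniqueness and continuous dependence follow at once. The genuinely Fredholm-theoretic content is the passage from injective to surjective; everything after that is bookkeeping.

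First I would pin down the stabilization index. Since $L$ is injective we have $Ker(L^1)=\{x\in E:~Lx=0\}=\{0\}=Ker(L^0)$, so the two initial kernels already coincide. In the notation of the proof of Theorem \ref{RieszK2}, $r$ is the smallest $m$ with $Ker(L^m)=Ker(L^{m+1})$, whence $r=0$. That same theorem establishes $r=s$, where $s$ is the smallest $m$ with $Ran(L^m)=Ran(L^{m+1})$; therefore $s=0$ and $Ran(L^0)=Ran(L^1)$, i.e. $E=Ran(L)$. This is precisely assertion (1): $L$ is surjective.

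It then remains to produce a bounded inverse. By Lemma \ref{Conwayp215} the range of $L$ is an orthogonal summand and in particular closed, so by the theorem of Xu and Sheng \cite{Xu/Sheng} the operator $L$ admits a bounded Moore--Penrose inverse $L^{\dag}$. Injectivity gives $Ker(L^{\dag}L)=Ker(L)=\{0\}$, and since $L^{\dag}L$ is a projection this forces $L^{\dag}L=I$; surjectivity makes $LL^{\dag}$ the projection onto $Ran(L)=E$, so $LL^{\dag}=I$ as well. Hence $L^{\dag}=L^{-1}$ is a genuine bounded adjointable two-sided inverse (alternatively, the bounded inverse theorem applies verbatim, $E$ being complete). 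For assertion (2), the homogeneous equation $x-Cx=0$ reads $Lx=0$ and has only the solution $x=0$ because $L$ is injective; for arbitrary $f\in E$ the inhomogeneous equation $Lx=f$ has the unique solution $x=L^{-1}f$, and the estimate $\|x\|=\|L^{-1}f\|\le\|L^{-1}\|\,\|f\|$ exhibits the continuous dependence on $f$.

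The main obstacle is conceptual rather than computational: it is the implication injective $\Rightarrow$ surjective, which fails for general bounded operators and is exactly where the compactness of $C$ together with property $\mathbb{[H]}$ enters, funneled through Theorem \ref{RieszK2}. Once surjectivity is secured, the only module-specific care required is to guarantee that the inverse is adjointable and bounded rather than merely a set-theoretic bijection, and this is supplied by the closed-range conclusion of Lemma \ref{Conwayp215} combined with \cite{Xu/Sheng}.
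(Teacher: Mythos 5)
Your proposal is correct, and part (1) coincides with the paper's argument: injectivity forces the kernel chain of Theorem \ref{RieszK2} to stabilize at $r=0$, and since the kernel and range indices agree, $Ran(L)=Ran(L^0)=E$. Where you genuinely diverge is in part (2), i.e.\ in showing that the inverse is bounded (and adjointable). The paper proves this by a self-contained sequential contradiction argument in the spirit of classical Riesz theory: assuming $L^{-1}$ unbounded, it normalizes $z_n = x_n/\|x_n\|$, uses property $\mathbb{[H]}$ and Lemma \ref{Arambasic1} to extract a convergent subsequence $Cz_{n_k}\to z$, and derives $z_{n_k}\to z\in Ker(L)=\{0\}$, contradicting $\|z_n\|=1$. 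You instead route everything through structure theory already available in the paper: Lemma \ref{Conwayp215} gives closedness of $Ran(L)$, Xu--Sheng \cite{Xu/Sheng} then yields a bounded adjointable Moore--Penrose inverse $L^{\dag}$, and the standard projection identities ($Ker(L^{\dag}L)=Ker(L)=\{0\}$ forces $L^{\dag}L=I$; $LL^{\dag}$ is the projection onto $Ran(L)=E$, so $LL^{\dag}=I$) show $L^{\dag}=L^{-1}\in\mathcal{L}(E)$. Your approach is shorter, avoids re-invoking compactness and property $\mathbb{[H]}$ a second time, and has the small added benefit of producing the inverse explicitly as an element of $\mathcal{L}(E)$, i.e.\ adjointable, which the paper's phrasing ``bounded, i.e.\ $(I-C)^{-1}\in\mathcal{L}(E)$'' glosses over; your parenthetical appeal to the bounded inverse theorem is also legitimate for boundedness, though by itself it does not immediately give adjointability, so the Moore--Penrose route is the cleaner of your two alternatives. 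The paper's argument, by contrast, is elementary and self-contained, needing neither generalized inverses nor the open mapping theorem.
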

\begin{proof}
In view of Theorem \ref{RieszK2} and the fact that $Ker(L)=\{0 \}$, we infer $r=0$ and $Ran(L)=E$, that is,
$L$ is surjective.

To prove the second part, we have to show that the inverse of $L=I-C$ is bounded, i.e. $(I-C)^{-1} \in \mathcal{L}(E)$.
Suppose $L^{-1}$ is not bounded. Then there exists a sequence $(y_n)$ with $ \| y_n \| = 1 $
such that the sequence $x_n := L^{-1} y_n $ is not bounded. We set
$ u_n := \frac{y_n}{\|x_n\|}$ and $z_n := \frac{ x_n}{\| x_n\|}$.
Then $ \|z_n\| = 1$, $Lz_n= \frac{Lx_n}{ \|x_n \|}=u_n$ and $u_n \to 0$ as $ n \to \infty $.
Utilizing the property $\mathbb{[H]}$, Lemma \ref{Arambasic1} and  compactness of $C$,
we can find a subsequence $z_{n_k}$ such that $Cz_{n_k} \to z \in E$ as $k \to \infty $. Then, the equalities
$$ Lz_{n_k} = z_{n_k} -Cz_{n_k}=u_{n_k},$$
imply that $z_{n_k} \to z$ with $Lz=0$. Thus $z \in Ker(L)=\{0 \}$, which contradicts $ \| z_n\| = 1$  for all $n \in \mathbb{N}$.
Therefore,  $(I-C)^{-1}f$ is the unique solution of the equation $x-Cx=f$, provided that $(I-C)^{-1}$ exists in $\mathcal{L}(E)$.
\end{proof}

\begin{corollary} \label{RieszK5}
Let $E$ be a Hilbert C*-module over a unital C*-algebra $ \mathcal{A}$ that satisfies
property $\mathbb{[H]}$ and $C \in \mathcal{K}(E)$. Let $L=I-C$ is not injective, then:
\begin{enumerate}
  \item The $ \mathcal{A}$-submodule $Ker(L)$ is
  finitely generated and the range $Ran(L)$ is a proper closed $ \mathcal{A}$-submodule.
  \item If the homogeneous equation $x-Cx=0$ has a nontrivial solution, then the inhomogeneous equation
  $$x-Cx=f$$
  is either unsolvable or its general solution is of the form
  $$ x= x_0 + \sum_{k=1}^{k=m} a_k x_k,$$
where $x_1, \dots, x_m $ are the generators of finitely generated $ \mathcal{A}$-submodule $Ker(L)$,
$a_1, \dots, a_m$ are in $ \mathcal{A}$,
and $x_0$ denotes a special solution of the inhomogeneous equation.
\end{enumerate}
\end{corollary}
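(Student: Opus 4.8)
The plan is to assemble the structural results already established; almost no genuinely new computation is required, and the whole statement reduces to combining Lemma \ref{RieszK1}, Lemma \ref{Conwayp215} and Theorem \ref{RieszK2} with the elementary coset description of the solution set of a linear equation.

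For assertion (1), the standing hypothesis that $L=I-C$ is not injective means exactly $Ker(L)\neq\{0\}$. The finite generation of $Ker(L)=\{x\in E:\ Cx=x\}$ is then immediate from Lemma \ref{RieszK1}, and the closedness of $Ran(L)$ follows from Lemma \ref{Conwayp215} with $\lambda=1$, since $Ran(I-C)$ is an orthogonal summand and hence closed. The only point that needs a short argument is that $Ran(L)$ is \emph{proper}. Here I would invoke Theorem \ref{RieszK2}: because $Ker(L^{0})=\{0\}\subsetneq Ker(L)=Ker(L^{1})$, the chain \eqref{Riesz3} forces the stabilization index $r$ to satisfy $r\geq 1$. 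Reading off \eqref{Riesz4}, the first inclusion is therefore strict, $E=Ran(L^{0})\supsetneq Ran(L^{1})=Ran(L)$, so $Ran(L)\neq E$, as claimed.

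For assertion (2), I would first note that its hypothesis (the homogeneous equation $x-Cx=0$ has a nontrivial solution) coincides with the standing assumption that $L$ is not injective, so part (1) is available. The equation $x-Cx=f$ is solvable precisely when $f\in Ran(L)$; since $Ran(L)$ is a proper closed submodule by part (1), the set $E\setminus Ran(L)$ is nonempty, and for any such $f$ the equation has no solution, which is the ``unsolvable'' alternative. When instead $f\in Ran(L)$, I would fix one particular solution $x_{0}$ with $Lx_{0}=f$. If $x$ is any solution, then $L(x-x_{0})=f-f=0$, so $x-x_{0}\in Ker(L)$; conversely $x_{0}+w$ solves the equation for every $w\in Ker(L)$. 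Hence the solution set is exactly the coset $x_{0}+Ker(L)$. Using part (1), write $Ker(L)$ as the submodule generated by $x_{1},\dots,x_{m}$; every $w\in Ker(L)$ is an $\mathcal{A}$-linear combination $\sum_{k=1}^{m}x_{k}a_{k}$ of these generators with $a_{k}\in\mathcal{A}$, which yields the stated form of the general solution.

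I expect no real obstacle: the only load-bearing step is the properness of $Ran(L)$, and even that is not an independent Fredholm-index computation but simply the strict inclusion already encoded in Theorem \ref{RieszK2}, extracted via the observation $r\geq 1$. Everything else is the routine description of the solution set of a linear equation as a coset of the kernel, transported to the module setting, where the finite generating set of $Ker(L)$ plays the role that a basis of the solution space plays in the classical Riesz--Fredholm theory.
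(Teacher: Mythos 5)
Your proposal is correct and follows essentially the same route as the paper: finite generation of $Ker(L)$ via Lemma \ref{RieszK1}, properness of $Ran(L)$ by noting that non-injectivity forces the stabilization index $r$ of Theorem \ref{RieszK2} to be positive (hence $Ran(L)=Ran(L^{1})\subsetneq Ran(L^{0})=E$), and the coset description $x_{0}+Ker(L)$ of the solution set expressed through the finitely many generators of $Ker(L)$. Your version is, if anything, slightly more explicit than the paper's (citing Lemma \ref{Conwayp215} for closedness of the range and spelling out both inclusions of the coset argument), but it is the same proof.
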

\begin{proof}
The $ \mathcal{A}$-submodule $Ker(L)$ is finitely generated by Lemma \ref{RieszK1}.
Since $L=I-C$ is not injective, $Ker(L) \neq \{ 0 \}$ and $r>0$, and so
$Ran(L) \neq E$ by Theorem \ref{RieszK2}.

To prove the second part, suppose $y_0$ is a nontrivial solution of the homogeneous equation $x-Cx=0$, and
the inhomogeneous equation $x-Cx=f$ is solvable with a solution $x_0$. Then, the general solution of $x-Cx=f$ is of the form
$x=x_0+y_0$. Since the $ \mathcal{A}$-submodule $Ker(L)$ is finitely generated, we can express $ y_0$ as
$$ y_0= \sum_{k=1}^{k=m} a_k x_k,$$ where $x_1, \dots, x_m $ are the generators of $ \mathcal{A}$-submodule $Ker(L)$, and
$a_1, \dots, a_m$ are elements of $ \mathcal{A}$. This completes the proof of (2).

\end{proof}

\begin{theorem} \label{RieszK6}
Let $E$ be a Hilbert C*-module over a unital C*-algebra $ \mathcal{A}$ that satisfies
property $\mathbb{[H]}$. Let $C \in \mathcal{K}(E)$ and $L=I-C$.
\begin{enumerate}
  \item The kernel of $L^r$ is a finitely generated submodule of $E$ and the orthogonal projection
  $$P : E =Ker(L^{r}) \oplus Ran(L^r) \to Ker(L^r)$$ is a compact projection.
  \item The operator $I- C - P$ is a bijection.
\end{enumerate}
\end{theorem}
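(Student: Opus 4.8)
The plan is to build everything on the stabilization of Theorem~\ref{RieszK2}, the orthogonal splitting $E=Ker(L^r)\oplus Ran(L^r)$ of Theorem~\ref{RieszK3}, and the matrix form of Corollary~\ref{RieszKEP}. The first move, useful for both parts, is to rewrite $L^r$ as a perturbation of the identity by a compact operator. Expanding $L^r=(I-C)^r$ with the binomial theorem gives $L^r=I-C'$, where $C'=\sum_{k=1}^{r}(-1)^{k+1}\binom{r}{k}C^k$; since $C\in\mathcal{K}(E)$ and $\mathcal{K}(E)$ is a two-sided ideal of $\mathcal{L}(E)$, the operator $C'$ is again compact.

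For part (1), finite generation of $Ker(L^r)=Ker(I-C')$ is then immediate from Lemma~\ref{RieszK1} applied to $C'$ in place of $C$. To see that the orthogonal projection $P$ onto $Ker(L^r)$ is compact, I would avoid any explicit frame for the generators and instead use the fixed-point description of the kernel: since $Ran(P)=Ker(L^r)=Ker(I-C')$, every vector $Pz$ satisfies $C'(Pz)=Pz$, that is $C'P=P$. As $C'$ is compact and $P$ is bounded and adjointable, the product $C'P$ lies in the ideal $\mathcal{K}(E)$, and hence $P=C'P$ is compact.

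For part (2), I would pass to the block structure determined by $E=Ker(L^r)\oplus Ran(L^r)$. First observe that $L$ leaves both summands invariant: if $L^rx=0$ then $L^r(Lx)=L(L^rx)=0$, while if $y=L^rw$ then $Ly=L^{r+1}w\in Ran(L^{r+1})=Ran(L^r)$ by Theorem~\ref{RieszK2}. Because the decomposition is orthogonal and reducing for $L$, both $L$ and the projection $P$ are simultaneously block-diagonal, $P$ acting as the identity on $Ker(L^r)$ and as $0$ on $Ran(L^r)$. Writing $L_1=L|_{Ker(L^r)}$ and $L_2=L|_{Ran(L^r)}$, the operator $I-C-P=L-P$ therefore restricts to $L_1-I$ on $Ker(L^r)$ and to $L_2$ on $Ran(L^r)$, and it suffices to show that each block is bijective. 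On $Ran(L^r)$ the block $L_2$ is surjective since $L(Ran(L^r))=Ran(L^{r+1})=Ran(L^r)$ and injective since $L_2y=0$ forces $L^ry=0$, so $y\in Ker(L^r)\cap Ran(L^r)=\{0\}$; this is exactly the invertible corner $I_r$ of Corollary~\ref{RieszKEP}. On $Ker(L^r)$ the block $L_1$ is nilpotent, $L_1^{\,r}=L^r|_{Ker(L^r)}=0$, so $I-L_1$ is invertible with inverse $\sum_{k=0}^{r-1}L_1^{\,k}$, whence $L_1-I=-(I-L_1)$ is invertible. A block-diagonal operator with bijective blocks is a bijection, so $I-C-P$ is a bijection.

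The main obstacle I anticipate is bookkeeping rather than any deep difficulty: one must check that the two summands are simultaneously reducing for both $L$ and $P$, so that the restrictions $L_1,L_2$ are genuine adjointable operators on the (orthogonally complemented, hence Hilbert) summands, and that bijectivity of a bounded adjointable block upgrades to invertibility in $\mathcal{L}$ of that summand via the open mapping theorem. I would emphasize that only the invertibility of the corner $I_r$ in Corollary~\ref{RieszKEP} is needed, not its identification with the identity, so no delicate point about that identification enters the argument.
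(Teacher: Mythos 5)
Your proof is correct, but in both parts it takes a genuinely different route from the paper. For the compactness of $P$ in part (1), the paper uses finite generation of $E_0=Ker(L^r)$: by Lemma \ref{Wegge257} the algebra $\mathcal{K}(E_0)$ is unital, so the identity on $E_0$ is a finite sum $\sum_i \theta_{x_i,y_i}$, and the paper writes $P=\sum_i\theta_{x_i,Py_i}$ explicitly as a finite-rank operator. Your argument instead exploits the fixed-point identity: with $L^r=I-C_r$ ($C_r$ compact by the binomial expansion, as in the paper), $Ran(P)=Ker(I-C_r)$ gives $P=C_rP$, which is compact because $\mathcal{K}(E)$ is an ideal in $\mathcal{L}(E)$. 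This is shorter and decouples compactness of $P$ from finite generation entirely (finite generation is of course still needed for the first assertion of the statement, and you obtain it the same way the paper does, via Lemma \ref{RieszK1}). For part (2), the paper proves only injectivity of $L-P$ by a direct computation ($x\in Ker(L-P)$ forces $L^{r+1}x=0$, hence $Px=x$, hence $Lx=x$, hence $x=L^rx=0$), and then gets surjectivity for free from the Fredholm alternative, Corollary \ref{RieszK4}, applied to the compact operator $C+P$ --- so the paper's part (2) leans essentially on the compactness of $P$ from part (1). You instead prove a Fitting-type decomposition: both summands of $E=Ker(L^r)\oplus Ran(L^r)$ are invariant under $L$ (you correctly verify both inclusions, which is necessary since $L$ is not self-adjoint), $L-P$ is block diagonal, the block on $Ker(L^r)$ is $-(I-L_1)$ with $L_1$ nilpotent (invertible via the finite Neumann series), and the block $L_2$ on $Ran(L^r)$ is bijective by range stabilization and $Ker(L^r)\cap Ran(L^r)=\{0\}$. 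Your route makes part (2) independent of part (1) and of Corollary \ref{RieszK4}, and even yields an explicit inverse on the nilpotent block; the paper's route is shorter given its machinery and showcases the Fredholm alternative. Two cosmetic points: the invertible corner $I_r$ of Corollary \ref{RieszKEP} is the compression of $L^r$, i.e.\ $L_2^{\,r}$, not $L_2$ itself, so your parenthetical identification is loose (though harmless, since you prove bijectivity of $L_2$ directly); and in the degenerate case $r=0$ injectivity of $L_2=L$ should be read off from the stabilization $Ker(L)=Ker(L^0)=\{0\}$ rather than from ``$L^0y=0$''.
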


\begin{proof}
The operator $L^r$ can be expressed as $L^r=I - C_r$, where $C_r= \sum_{j=1}^{r} (-1)^{j-1} \, \frac{r!}{j!(r-j)!} \, C^j$ is
a compact operator on $E$.
According to Lemma \ref{RieszK1} and Lemma \ref{Wegge257}, the kernel of $L^r$ is a finitely generated submodule of $E$. If
$E_0=Ran(P)=Ker(L^r)$, then $\mathcal{K}(E_0)$ is a unital C*-algebra. Then the identity map on the Hilbert $ \mathcal{A}$-module
$E_0$ can be written as $ \sum _{i=1}^{i=m} \theta_{x_i , \, y_i}$, for some $x_i, \, y_i \in E_0$, and so we have
$$Px=I_{ \mathcal{K}(E_0) }(Px) = \sum _{i=1}^{i=m} \theta_{x_i , y_i} (Px) = \sum _{i=1}^{i=m} x_i \, \langle y_i, Px  \rangle =
 \sum _{i=1}^{i=m} \theta_{x_i , Py_i}(x), $$
for all $x \in E$. Consequently, $P$ is a compact projection in $ \mathcal{K}(E)$.

To prove the second part, we need to show that $Ker( I - C- P)=\{0 \}$. Let $x \in Ker(L-P)$, then
$$L^{r+1}x + 0=L^{r+1}x - L^r(Px)= L^r(Lx-Px)=L^r(0)=0.$$
That is, $x \in Ker(L^{r+1})= Ker(L^{r})$. Hence, $Px=x$, which implies $0=Lx - Px = Lx-x$,  or equivalently, $x=Lx$.
It follows that $x$ is a fixed point for all powers of $L$, and therefore
$$x= L^r x =0.$$
This proves that $Ker(L-P)=\{0 \}$. Utilizing Corollary \ref{RieszK4} for the compact operator $C+P$,  we
conclude that $L-P=I-(C+P)$ is surjective.
\end{proof}

We end our paper with the remark that the operator $L=I-C$ in our results can be generalized to $L=\lambda I -C$,
where $ \lambda \in \mathbb{C}$ and $C \in \mathcal{K}(E)$.

\subsection*{Funding} This research did not receive any specific grant from funding agencies
in the public, commercial, or not-for-profit sectors.
\subsection*{Data Availability}
Data sharing not applicable to this article as no datasets were generated
or analysed during the current study.
\subsection*{Conflict of interest}
On behalf of all authors, the corresponding author states that there is
no conflict of interest.


\end{document}